\date{}
\newcommand{\VC}{\text{VC}}
\newcommand{\LD}{\text{LD}}
\newcommand{\coVC}{\text{coVC}}
\newcommand{\VCd}{\text{VCd}}
\newcommand{\VCm}{\text{VCm}}
\newcommand{\VCId}{\text{VCd}_{\text{ind}}}
\newcommand{\VCI}{\text{VC}_\text{ind}}
\newcommand{\monster}{\mathfrak{M}}
\newcommand{\M}[1]{\text{M}^{|#1|}}
\renewcommand{\phi}{\varphi}
\renewcommand{\bf}[1]{\textbf{#1}}
\newcommand{\x}{\textbf{x}}
\newcommand{\y}{\textbf{y}}
\newcommand{\sse}{\subseteq}
\newtheorem{theorem}{Theorem}[section]
\newtheorem{lemma}[theorem]{Lemma}
\newtheorem{proposition}[theorem]{Proposition}
\newtheorem{corollary}[theorem]{Corollary}
\newtheorem{definition}{Definition}[section] 
\title[$\VC$-density, indiscernibles, and the maximum property]{Vapnik-Chervonenkis density on indiscernible sequences, stability, and the maximum property}
\author{Hunter Johnson}
\email{hujohnson@jjay.cuny.edu}
 \address{Dept. Math \& CS, John Jay College, CUNY, 444 W. 59th St., New York, NY 10019.}
 \keywords{VC-density, NIP, stability} 
 \subjclass{03C45}
\begin{document} 

\begin{abstract}
This paper presents some finite combinatorics of set systems with applications to model theory, particularly the study of dependent theories.  There are two main results.  First, we give a way of producing lower bounds on $\VCI$-density, and use it to compute the exact $\VCI$-density of polynomial inequalities, and a variety of geometric set families.  The main technical tool used is the notion of a maximum set system, which we juxtapose to indiscernibles.  In the second part of the paper we give a maximum set system analogue to Shelah's characterization of stability using indiscernible sequences. 
\end{abstract} 
 \maketitle

\section{Introduction}

In the recent past there have been a number of papers relating various measures of the combinatorial structure of NIP theories to one another \cite{AsDoHaMaSt11,GuHi11, ItOnUs11}. One fact which emerged from this is the close relation of dp-rank and VC-density restricted to indiscernible sequences.  Guingona and Hill have used the term $\VCI$-density to describe VC-density restricted to indiscernibles.  At the end of their paper \cite{GuHi11}, Guingona and Hill ask if there is a useful characterization of when a formula has $\VCI$-density equal to VC-density.  We offer such a characterization below (Corollary \ref{C1}), and use it to compute the exact $\VCI$-density of certain formulas. 

A separate goal of this paper is to show how maximum set systems can in many cases be used as more accessible surrogates for indiscernible sequences.  To this end we translate Shelah's well-known characterization of stability in terms of indiscernible sets to a version involving maximum set systems.


\section{Notation}
 
  Let there be a fixed complete theory $T$, with a large saturated model $\monster = \langle \text{M},\ldots \rangle$.  All sets and models, unless otherwise stated, are assumed to be elementarily embedded in the model $\monster$. We write formulas in partitioned form $\phi(\x,\y)$, where $\x = \langle x_1,\ldots,x_l \rangle$, and $\y = \langle y_1,\ldots,y_k \rangle$.  We use $\mathcal{P}(X)$ to denote the power set of $X$.
  
  For $A \sse \M{\y}$ and $\bf{b} \in \M{\x}$, 
  
  $$\phi({\bf{b}},A) := \{\bf{a} \in A: \models \phi(\bf{b},\bf{a})\}$$

We use $S_\phi(A)$ for $A \sse \M{\y}$ to denote the set of $\phi$-types over $A$, where a $\phi$-type over $A$ is a maximal consistent set of the form $\{\pm \phi(\x,\bf{a}):\bf{a} \in \M{\y}\}$. 
   We let $S_\phi(A)|_B = \{{tp}_\phi(\bf{b}/A):\bf{b} \in B\}$ when $B \sse \M{\x}$.
        For an individual type $p \in S_\phi(A)$, we often identify $p$ and the set of its positive parameter instances $ \{\bf{a}\in A:  \phi(\x,\bf{a}) \in p\}$ without further comment.  Similarly we sometimes identify $S_\phi(A)|_B$ and $\{\phi(\bf{b},A): \bf{b} \in B\}$, as in the following definition. 

\begin{definition}
Let $\mathcal{C} \subseteq \mathcal{P}(\M{\x})$. We will say that $\phi(\x,\y)$ \emph{traces} $\mathcal{C}$ if for some $A \sse \M{\x}$, $\mathcal{C} \subseteq S_\phi(A).$

\end{definition}

We now give some purely combinatorial definitions.  For the rest of this section suppose $X$ is a set and $\mathcal{C} \sse \mathcal{P}(X)$.  For $A \sse X$, let $\mathcal{C}|^A = \{C \cap A: C \in \mathcal{C}\}$.  Say that $\mathcal{C}$ \emph{shatters} $A$ if $\mathcal{C}|^A = \mathcal{P}(A)$. 

\begin{definition} 
  The \emph{Vapnik-Chervonenkis (VC) dimension} of $\mathcal{C}$, denoted $\VC(\mathcal{C})$, is $|A|$ where $A\sse X$ is of maximum finite cardinality such that $\mathcal{C}$ shatters $A$.  
\end{definition}

If the VC-dimension of $\mathcal{C}$ does not exist, we write $\VC(\mathcal{C}) = \infty$.  The VC-dimension was first considered in \cite{VaCh71} and was introduced into model theory by Laskowski \cite{L92}. The following notion of a maximum VC family was defined by Welzl \cite{We87}.

\begin{definition}
  Say that $\mathcal{C}$ is \emph{$d$-maximum} for $d \in \omega$ if for any finite $A \sse X$, $|\mathcal{C}|^A| = {{|A|}\choose{\leq d}}$.
\end{definition}
Here ${{n}\choose{\leq k}}$ is shorthand for $\sum_{i=0}^k {{n}\choose{i}}$ if $k<n$ and $2^n$ otherwise.  

\begin{lemma}[Sauer's Lemma \cite{Sa72,Sh72}] 
If $\mathcal{C}$ has $\VC(\mathcal{C}) = d$, then for any finite $A \sse X$, 
$$|\mathcal{C}|^A| \leq {{|A|}\choose{\leq d}}$$ 
\end{lemma}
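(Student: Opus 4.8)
The plan is to prove the displayed inequality by induction, after one routine reduction: since whether $\mathcal{C}$ shatters a subset of $A$ depends only on $\mathcal{C}|^A$, we have $\VC(\mathcal{C}|^A)\leq \VC(\mathcal{C})=d$, and $|\mathcal{C}|^A|$ is exactly the quantity to be bounded, so we may replace $\mathcal{C}$ by $\mathcal{C}|^A$ and assume $\mathcal{C}\sse\mathcal{P}(A)$ with $A$ finite, $|A|=n$. We then prove: for all $n,d\in\omega$, if $\mathcal{C}\sse\mathcal{P}(A)$, $|A|=n$, and $\VC(\mathcal{C})\leq d$, then $|\mathcal{C}|\leq\binom{n}{\leq d}$, by induction on $n$. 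The base cases are $n=0$, where $\mathcal{C}\sse\{\emptyset\}$ so $|\mathcal{C}|\leq 1=\binom{0}{\leq d}$, and $d=0$, where $\mathcal{C}$ shatters no singleton, so for every $y\in A$ either all members of $\mathcal{C}$ contain $y$ or none do; hence any two members of $\mathcal{C}$ agree on every coordinate, forcing $|\mathcal{C}|\leq 1=\binom{n}{\leq 0}$.

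For the inductive step assume $n\geq 1$ and $d\geq 1$, fix $x\in A$, put $A'=A\setminus\{x\}$, and split $\mathcal{C}$ into
$$\mathcal{C}_1=\mathcal{C}|^{A'}\quad\text{and}\quad \mathcal{C}_2=\{S\sse A': S\in\mathcal{C}\text{ and }S\cup\{x\}\in\mathcal{C}\},$$
both regarded as subfamilies of $\mathcal{P}(A')$. The first key point is the counting identity $|\mathcal{C}|=|\mathcal{C}_1|+|\mathcal{C}_2|$: the map $C\mapsto C\cap A'$ carries $\mathcal{C}$ onto $\mathcal{C}_1$, and a set $S\in\mathcal{C}_1$ has two preimages in $\mathcal{C}$ (namely $S$ and $S\cup\{x\}$) precisely when $S\in\mathcal{C}_2$, and one preimage otherwise. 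The second key point is the pair of dimension estimates $\VC(\mathcal{C}_1)\leq d$ and $\VC(\mathcal{C}_2)\leq d-1$. The first is immediate since $\mathcal{C}_1$ is a restriction of $\mathcal{C}$. For the second, suppose $\mathcal{C}_2$ shatters $B\sse A'$; given any $T\sse B\cup\{x\}$, choose $S\in\mathcal{C}_2$ with $S\cap B=T\cap B$, and note that $S\cap(B\cup\{x\})=T\cap B$ while $(S\cup\{x\})\cap(B\cup\{x\})=(T\cap B)\cup\{x\}$, with both $S$ and $S\cup\{x\}$ in $\mathcal{C}$; thus $\mathcal{C}$ shatters $B\cup\{x\}$, so $\VC(\mathcal{C})\geq|B|+1$, giving $|B|\leq d-1$.

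Combining these with the inductive hypothesis applied to $\mathcal{C}_1$ and $\mathcal{C}_2$ on the $(n-1)$-element set $A'$ yields
$$|\mathcal{C}|=|\mathcal{C}_1|+|\mathcal{C}_2|\leq\binom{n-1}{\leq d}+\binom{n-1}{\leq d-1}=\binom{n}{\leq d},$$
where the final equality is Pascal's rule summed termwise, $\sum_{i=0}^{d}\binom{n-1}{i}+\sum_{i=0}^{d-1}\binom{n-1}{i}=\sum_{i=0}^{d}\left(\binom{n-1}{i}+\binom{n-1}{i-1}\right)=\sum_{i=0}^{d}\binom{n}{i}$, with the usual convention $\binom{n-1}{-1}=0$ and the case $d\geq n$ immediate. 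The step I expect to require the most care is the bound $\VC(\mathcal{C}_2)\leq d-1$: it is the only place the hypothesis on $\VC(\mathcal{C})$ is used essentially, and one must check that a shattered set for $\mathcal{C}_2$ genuinely ``doubles'' to a shattered set for $\mathcal{C}$ of cardinality one larger. An alternative, if one prefers a non-inductive argument, is the down-shifting method: repeatedly replace $\mathcal{C}$ by the family obtained by sending $S\mapsto S\setminus\{x\}$ (for a fixed coordinate $x$) whenever $S\setminus\{x\}\notin\mathcal{C}$; this preserves $|\mathcal{C}|$, never increases the VC-dimension, and terminates at a downward-closed family, which then contains only sets of size $\leq d$ and hence has at most $\binom{n}{\leq d}$ members. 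I would present the inductive proof as the main one since it is the shortest to write in full.
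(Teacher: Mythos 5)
Your proof is correct: the reduction to $\mathcal{C}|^A$, the counting identity $|\mathcal{C}|=|\mathcal{C}_1|+|\mathcal{C}_2|$, the key bound $\VC(\mathcal{C}_2)\leq d-1$ via doubling a shattered set to $B\cup\{x\}$, and the termwise Pascal summation are all sound, and the edge cases ($n=0$, $d=0$, $d\geq n$) are handled. Be aware, though, that the paper does not prove this lemma at all --- it is quoted from Sauer and Shelah with a citation --- so there is no in-paper argument to compare yours against. What you have written is the standard Sauer--Shelah induction; it is worth noting that the same decomposition (restrict to $X_0\setminus\{a\}$, split off the subfamily whose members occur both with and without $a$, and apply Pascal's identity) is exactly the device the author sketches later in the proof of Theorem \ref{T1}, so your argument fits naturally with the techniques the paper does use.
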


Thus a $d$-maximum set system is ``extremal" among set systems of VC-dimension $d$.  These set systems are highly structured, and well-understood \cite{Fl89,KW}.  There are several examples that arise from natural algebraic situations.  In fact it is conjectured that all $d$-maximum set systems arise from (or embed naturally in) arrangements of half-spaces, either in a euclidean or hyperbolic space \cite{BIP}.

It is easy to see that if $\mathcal{C}$ is $d$-maximum on $X$, and $A \sse X$ has $|A|=d+1$, then
$$\mathcal{C}|^A = \mathcal{P}(A) \setminus \{C\}$$
for some $C \sse A$.  Floyd \cite{Fl89} calls such a $C$ the \emph{forbidden label} of $\mathcal{C}$ on $A$. 

Let $[X]^m := \{A \sse X: |A|=m\}$. For a fixed $d$-maximum $\mathcal{C} \sse \mathcal{P}(X)$, associate with each $A \in [X]^{d+1}$ the forbidden label $C_A \sse A$, where $\mathcal{C}|^A = \mathcal{P}(A) \setminus \{C_A\}$.

  Floyd proves the following.
  
  \begin{proposition}\label{Pf}
   On a finite domain $X$, any $d$-maximum set system $\mathcal{C}$ is \emph{characterized} by its forbidden labels, in the sense that $\forall B \sse X$,
$$B \in \mathcal{C} \iff \forall A \in [X]^{d+1}(B\cap A) \neq C_A $$
\end{proposition}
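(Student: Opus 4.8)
The plan is to establish the two directions of the biconditional separately. The forward direction is immediate: if $B \in \mathcal{C}$, then for each $A \in [X]^{d+1}$ the set $B \cap A$ lies in $\mathcal{C}|^A = \mathcal{P}(A) \setminus \{C_A\}$, hence $B\cap A \neq C_A$. This uses nothing beyond the definition of the forbidden labels, so the real work lies in the converse, for which I would argue by a cardinality count rather than by directly exhibiting $B$ as an element of $\mathcal{C}$.

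For the converse, write $\mathcal{D}$ for the right-hand collection, i.e.\ the set of all $B \sse X$ with $B \cap A \neq C_A$ for every $A \in [X]^{d+1}$. The forward direction already gives $\mathcal{C} \sse \mathcal{D}$, so it suffices to prove $|\mathcal{D}| \leq |\mathcal{C}|$. The key observation is that $\mathcal{D}$ has $\VC$-dimension at most $d$: if $\mathcal{D}$ shattered some $A$ with $|A| = d+1$, then $\mathcal{D}|^A = \mathcal{P}(A)$ would contain $C_A$, contradicting the defining property of $\mathcal{D}$; and shattering a set of size exceeding $d$ would force shattering one of its subsets of size $d+1$. Applying Sauer's Lemma on the domain $X$ itself then yields $|\mathcal{D}| = |\mathcal{D}|^X| \leq \binom{|X|}{\leq d}$. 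On the other hand, taking $A = X$ in the definition of $d$-maximum (legitimate since $X$ is finite) gives $|\mathcal{C}| = |\mathcal{C}|^X| = \binom{|X|}{\leq d}$. Combining, $|\mathcal{C}| \leq |\mathcal{D}| \leq \binom{|X|}{\leq d} = |\mathcal{C}|$, so $\mathcal{C} = \mathcal{D}$, which is exactly the claimed characterization.

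I do not anticipate a serious obstacle: the substantive content is packaged into Sauer's Lemma, which is available to us, and the only genuinely new input is the one-line bound $\VC(\mathcal{D}) \leq d$. It is worth recording the degenerate case $|X| \leq d$ to be safe: there $[X]^{d+1} = \emptyset$, so the condition on $B$ is vacuous and $\mathcal{D} = \mathcal{P}(X)$, while $d$-maximality forces $\mathcal{C} = \mathcal{P}(X)$ as well, and the statement holds for trivial reasons.
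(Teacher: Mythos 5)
Your proposal is correct and uses essentially the same argument as the paper: the paper's proof of the converse observes that $\mathcal{C} \cup \{B\}$ shatters no new sets and invokes Sauer's Lemma together with $d$-maximality to force $B \in \mathcal{C}$, which is exactly your cardinality count applied to a single added set rather than to the whole family $\mathcal{D}$ at once. The only (harmless) cosmetic difference is that you bound $\VC(\mathcal{D}) \leq d$ globally and compare $|\mathcal{D}|$ with $|\mathcal{C}| = \binom{|X|}{\leq d}$ in one step.
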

\begin{proof}
Left to right is obvious.  For right to left, let $B$ satisfy the given conditions.  Then $\mathcal{C} \cup \{B\}$ shatters no sets not shattered by $\mathcal{C}$.  By Sauer's Lemma, $B$ must already be in $\mathcal{C}$.
\end{proof}

We now define the notion of a forbidden code, which is essentially the ``form" of a forbidden label, when an ordering is present.

Let $\mathfrak{L}_\mathcal{C}(X) = \{C_A: A \in [X]^{d+1}\}$ denote the forbidden labels of $\mathcal{C}$ on $X$.  Let $<$ be a fixed but arbitrary linear order on $X$. For each $C_A \in \mathfrak{L}_\mathcal{C}(X)$, let $\overline{C_A} = \langle t_0,\ldots,t_d \rangle$, where each $t_i \in \{0,1\}$, and $t_i =1$ if and only if the $i$th element of $A$ is in $C_A$. Define $\overline{\mathfrak{L}_\mathcal{C}(X)} = \{\overline{C_A}:C_A \in \mathfrak{L}_\mathcal{C}(X)\}$.

We will refer to  $\overline{\mathfrak{L}_\mathcal{C}(X)}$ as the set of \emph{forbidden codes} on $X$ for $\mathcal{C}$, with respect to $<$.  
When a maximum set system has a unique forbidden code, that code determines everything about the system at the level of finite traces.  Technically we say that the system is finitely characterized by the code.

\begin{definition}
The set system $\mathcal{C} \subseteq \mathcal{P}(X)$ is \textit{finitely characterized} by $\eta \in 2^{d+1}$ if for any finite $X_0 \subseteq X$, and $A \subseteq X_0$ the following are equivalent
\begin{enumerate}
\item $A \in \mathcal{C} |^{X_0}$
\item There are not elements $a_0 < \cdots < a_{d}$ in $X_0$ such that $a_i \in A \iff \eta(i) = 1$. 
\end{enumerate}
\end{definition}

There is a natural way in which forbidden codes can serve as combinatorial invariants for finite unions of points and $<$-convex subsets in $X$. To see this, suppose $(X,<)$ is a dense and complete linear order, and $B\sse X$ is a finite union of convex subsets.  Let $d$ be the number of boundary points of $B$. We can imagine that $B$ is defined by some $L_< = \{<\}$ formula $\psi(x,c_1,\ldots,c_d)$ with $c_1<\cdots<c_d \in X$. Define $\mathcal{F}(B) = \{\psi(X,c'_1,\cdots,c'_d): c'_1 < \cdots < c'_d \in X\}$.  Intuitively the elements of $\mathcal{F}(B) \sse \mathcal{P}(X)$ are the ``homeomorphic images" of $B$ in $(X,<)$. 
In \cite{J11} we show that $\mathcal{F}(B)$ is finitely characterized by some $\eta \in 2^{d+1}$.

 Define the \textit{genus} of $B$, denoted $\mathbb{G}(B)$, to be the $\eta \in 2^{d+1}$ that finitely characterizes $\mathcal{F}(B)$.  Equivalently, define $\mathbb{G}(B)$ to be any $\eta \in 2^{d+1}$ such that there are no $a_0 < \cdots < a_{d}$ in $X$ such that $a_i \in B$ if and only if $\eta(i)=1$, for all $i=0,\ldots,d$.

In \cite{J11} we show that such an $\eta$ exists, and is unique, as well as the further fact that for any $\eta \in 2^{< \omega}$ there is some $B \sse X$ such that $\mathbb{G}(B)=\eta$.  Simple rules for computing genus are given in Table \ref{T:key}.  

\begin{table}
\begin{center}
\begin{tabular}{|l|l|} \hline
code & translation\\  \hline
$\langle 1 \ldots \rangle$ & do nothing\\
$\langle 0 \ldots \rangle$ & $(-\infty,\ldots $\\
$\langle \ldots 0,0 \ldots \rangle$ & remove point\\
$\langle \ldots 0,1 \ldots \rangle$ & end interval\\
$\langle \ldots 1,0 \ldots \rangle$ & begin interval\\
$\langle \ldots 1,1 \ldots \rangle$ & add point\\
$\langle \ldots 0 \rangle$ & $\ldots, \infty)$\\
$\langle \ldots 1 \rangle$ & do nothing\\ \hline
\end{tabular}
\end{center}
\caption{A key for assigning forbidden codes to unions of convex sets.}\label{T:key}
\end{table}

To give an example of applying the table, suppose $X=\mathbb{R}$, and $<$ is the usual ordering.  Then the genus of the point $\{0\}$ is $\langle 11 \rangle$, and the genus of the interval $(0,1)$ is $\langle 101\rangle$.  Conversely, to consider all subsets of $\mathbb{R}$ with genus $\langle 11 \rangle$, let $\mathcal{C}$ be all the singletons. Similarly, the collection of all subsets of genus $\langle 101\rangle$ is exactly the set of all infinite convex subsets which are not coinitial or cofinal.

 The assumption that $(X,<)$ is complete was made to give a clear presentation of the genus concept, and is sufficient for this paper.  One can, however, define the genus of $B\sse X$ on other orders by considering the shortest $\eta \in 2^{< \omega}$ which $B$ does not induce, sidestepping the issue of boundary points.  
 
The link between genus and forbidden codes is given in the following theorem.

\begin{theorem}\label{T1}
Suppose that $(X,<)$ is a complete dense linear order without endpoints. If $\eta \in 2^{d+1}$, and $\mathcal{C} = \{C \subseteq X: \mathbb{G}(C) = \eta\}$, then $\mathcal{C}$ is $d$-maximum on $X$ and $\overline{\mathfrak{L}_\mathcal{C}(X)}=\{\eta\}$.

\end{theorem}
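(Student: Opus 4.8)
The plan is to verify the two assertions — that $\mathcal{C} = \{C \subseteq X : \mathbb{G}(C) = \eta\}$ is $d$-maximum, and that its unique forbidden code is $\eta$ — more or less simultaneously, using the characterization of genus as the shortest string a set fails to induce. First I would fix a finite $A \subseteq X$ with $|A| = n$ and enumerate $A$ as $a_0 < \cdots < a_{n-1}$. The goal is to show $|\mathcal{C}|^A| = \binom{n}{\leq d}$. The natural approach is to exhibit, for each "candidate trace" $T \subseteq A$, whether or not $T$ is realized as $C \cap A$ for some $C$ with $\mathbb{G}(C) = \eta$, and to show that the traces that fail are in bijection with the $(d+1)$-subsets of $A$.

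Concretely, I would argue as follows. Given any subset $T \subseteq A$, use the genus-computation rules (the "do nothing / begin interval / add point / remove point" translations of Table \ref{T:key}) together with the fact, cited from \cite{J11}, that every $\eta \in 2^{<\omega}$ is the genus of some $B \subseteq X$: one can build a set $B \subseteq X$ with $\mathbb{G}(B) = \eta$ whose intersection with $A$ is exactly $T$, unless $T$ itself already exhibits the forbidden pattern $\eta$ on some increasing $(d+1)$-tuple from $A$. This direction — realizing $T$ when it is "allowed" — is where I would spend most of the work; density and completeness of $(X,<)$ without endpoints are exactly what let me insert the interval endpoints and isolated points of $B$ strictly between consecutive elements of $A$ (or below all of $A$, or above all of $A$) so as to hit precisely the points of $T$ and no others. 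For the converse, if $T \subseteq A$ does contain an increasing $(d+1)$-tuple $a_{i_0} < \cdots < a_{i_d}$ with $a_{i_j} \in T \iff \eta(j)=1$, then any $C \supseteq$-extending-$T$-on-$A$ would have that same tuple witnessing the forbidden code, so $\mathbb{G}(C) \neq \eta$ and $T \notin \mathcal{C}|^A$. Thus $\mathcal{C}|^A$ is exactly the family of $T \subseteq A$ that do not induce $\eta$, and a short counting argument (this is precisely the content of the genus/forbidden-code correspondence, and matches Sauer's bound with equality) gives $|\mathcal{C}|^A| = \binom{n}{\leq d}$. Since this holds for all finite $A$, $\mathcal{C}$ is $d$-maximum.

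Having established that $\mathcal{C}$ is $d$-maximum, the statement about forbidden codes is then nearly immediate: for $A \in [X]^{d+1}$, the forbidden label $C_A$ is by definition the unique subset of $A$ not in $\mathcal{C}|^A$, and by the previous paragraph that unique missing subset is exactly the $T \subseteq A$ with $a_i \in T \iff \eta(i) = 1$, i.e. $\overline{C_A} = \eta$. Hence $\overline{\mathfrak{L}_\mathcal{C}(X)} = \{\eta\}$.

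The main obstacle I anticipate is the realizability step: given an "allowed" $T \subseteq A$, constructing a witnessing $C$ with $\mathbb{G}(C) = \eta$ and $C \cap A = T$. One must be careful that forcing $C$ to agree with $T$ on all of $A$ does not accidentally force $C$ to induce $\eta$ somewhere, and conversely that $C$ can be taken to have exactly $d$ boundary points realizing the shape dictated by $\eta$. I expect the cleanest route is to read off from $\eta$ (via Table \ref{T:key}) a "template" finite union of convex sets and points, then realize a homeomorphic copy of that template inside $(X,<)$ whose feature points interleave correctly with $a_0 < \cdots < a_{n-1}$ according to $T$; the density, completeness, and lack of endpoints of $X$ are used precisely here. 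Once that construction is in hand, both halves of the theorem fall out of the counting identity for strings not inducing a fixed $\eta$, which is already implicit in the genus machinery of \cite{J11}.
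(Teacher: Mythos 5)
Your proposal is correct, but it takes a genuinely different route from the paper's. The paper proves $d$-maximality by the standard ``last element'' induction: with $a=\max X_0$ and $\mathcal{C}^a=\{C\in\mathcal{C}|^{X_0\setminus\{a\}}: C\in\mathcal{C}|^{X_0}\text{ and }C\cup\{a\}\in\mathcal{C}|^{X_0}\}$, it argues by induction on $|X_0|$ and $d$ that $\mathcal{C}^a$ is $(d-1)$-maximum and $\mathcal{C}|^{X_0\setminus\{a\}}$ is $d$-maximum, and concludes via Pascal's identity; the claim $\overline{\mathfrak{L}_\mathcal{C}(X)}=\{\eta\}$ is not separately argued in the sketch. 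You instead identify $\mathcal{C}|^A$ exactly with the family of $T\subseteq A$ avoiding $\eta$ as an induced pattern, which splits the work into (i) the realizability step you correctly flag as the crux --- essentially the ``finitely characterized'' statement quoted from \cite{J11}, and requiring the care you note, since $\mathbb{G}(C)=\eta$ demands not merely that $C$ avoid $\eta$ but that $\eta$ be the unique avoided code, forcing the template to have exactly the shape dictated by $\eta$ --- and (ii) the combinatorial fact that the number of binary strings of length $n$ avoiding a fixed length-$(d+1)$ subsequence is $\binom{n}{\leq d}$, which you assert rather than prove; it is true and standard, but its usual proof is precisely the last-element/Pascal induction the paper runs, so the two arguments reconverge there. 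What your route buys is that the second assertion of the theorem falls out immediately: on any $A\in[X]^{d+1}$ the unique missing trace is the one realizing $\eta$, so every forbidden code equals $\eta$. Neither of your two deferred sublemmas is a genuine gap, but a complete write-up would have to supply both.
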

\begin{proof}

We provide a sketch of the proof that $\mathcal{C}$ is $d$-maximum, which is very similar to the well-known proof that unions of intervals are maximum.

Let $X_0 \sse X$ be finite, and $\mathcal{C} = \{C \subseteq X: \mathbb{G}(C) = \eta\}$.  Let $a := \text{max }X_0$ and $X_0^a = X_0 \setminus \{a\}$. Define $\mathcal{C}^a = \{C \in \mathcal{C}|^{X_0^a} : C \cup\{a\} \in \mathcal{C}|^{X_0} \And C \in \mathcal{C}|^{X_0}\}$.  By induction on $|X_0|$ and $d$, $\mathcal{C}^a$ is $d-1$ maximum and $\mathcal{C}|^{X_0^a}$ is $d$-maximum.  Then  $|\mathcal{C}|^{X_0}| = |\mathcal{C}^a|+|\mathcal{C}|^{X_0^a}|$, and, by Pascal's identity, $|\mathcal{C}|^{X_0}| = {{|X_0|}\choose{\leq d}}$.

\end{proof}



\section{Results}
\subsection{$\VCm$ and $\VCI$-density}\label{S3}

We now apply the above to achieve our results.  Recall the following definitions. 
\begin{definition}
A formula $\phi({\x},{\y})$ has \emph{VC-density} $\leq$ r for $r \in \mathbb{R}$ if there is $K \in \omega$ such that for every finite $A \sse \M{{\y}}$, $|S_\phi(A)| < K\cdot|A|^r$.  We denote this by $\VCd(\phi) \leq r$. 
\end{definition}
\begin{definition}
A formula $\phi(\x,\y)$ has \emph{$\VCI$-density} $\leq$ r for $r \in \mathbb{R}$ if there is $K \in \omega$ such that for every finite and indiscernible sequence 
$\bar{\bf{b}} =   \langle \bf{b}_i : i < N \rangle  \in \text{M}^{|\y| \cdot N}$, 
 $|S_\phi(range(\bar{\bf{b}}))| < K\cdot N^r$.  We denote this by $\VCId(\phi) \leq r$. 
\end{definition}

The study of $\VC$-density has emerged several times in model theory.  See \cite{AsDoHaMaSt11} for historical remarks.

There has been some study of the fact that frequently $\VCd(\phi)$ is bounded by a simple (and uniform) function of $|\bf{x}|$ \cite{AsDoHaMaSt11,JL10}. When this is true, it justifies the heuristic practice of ``parameter counting" to guess the complexity of set systems. Guingona and Hill showed that in a $dp$-minimal theory $\VCId(\phi) \leq |\x|$.  Thus there is interest in bounding $\VCd(\phi)$ by a function of $\VCId(\phi)$ (obviously $\VCd(\phi) \geq \VCId(\phi)$.)  This may not be possible in general, but we now show a practicable route to achieving it for a given formula.

\begin{definition}
For a set $A \subseteq \M{\y}$ we denote the traces of $\phi(\bf{x},\bf{y})$ on $A$ by
 $$Tr(\phi,A) = \mathcal{P}(S_\phi(A))$$ 
 We refer to the traces of $\phi$ on sets of cardinality $\kappa$ by
$$ Tr_\kappa(\phi) = \bigcup_{A \in [\M{\y}]^\kappa} Tr(\phi,A)$$
 
\end{definition}
The following is easy, but interesting, because the collection of $d$-maximum set systems would seem, \emph{a priori}, to be very diverse.  It also informs Definition \ref{DD}.

\begin{lemma}\label{tts}
Let $\kappa$ be infinite. For each $d\in \omega$, there is some $\mathcal{C} \in Tr_\kappa(\phi)$ such that $\mathcal{C} \text{ is $d$-maximum}$ if and only if for each $n \in \omega$, $n \geq d$, there is $\mathcal{C}_n \in Tr_n(\phi)$ such that $\mathcal{C}_n \text{ is $d$-maximum}$.
\end{lemma}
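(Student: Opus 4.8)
The forward implication is immediate: if $\mathcal{C}\sse S_\phi(A)$ with $|A|=\kappa$ is $d$-maximum and $n\ge d$, pick $A_0\in[A]^n$; the restriction $\mathcal{C}|^{A_0}$ is a restriction of a $d$-maximum system, hence is $d$-maximum on $A_0$, and it lies in $\mathcal{P}(S_\phi(A_0))\sse Tr_n(\phi)$, so take $\mathcal{C}_n=\mathcal{C}|^{A_0}$. For the converse I would argue by compactness. The first observation is that ``$\phi$ traces a $d$-maximum system on some $m$-element set'' is first-order expressible: $d$-maximality of a set system on $m$ labelled points is a Boolean condition on the finitely many bits recording which of $\binom{m}{\le d}$ witnesses $\phi$-relates to which of the $m$ points, so there is an $L$-formula $\Psi_m(v_1,\dots,v_m;z_1,\dots,z_{\binom{m}{\le d}})$ — the $v_i$ tuples from $\M{\y}$, the $z_j$ tuples from $\M{\x}$ — asserting that the $v_i$ are pairwise distinct and $\{\phi(z_j,\{v_1,\dots,v_m\}):j\}$ is $d$-maximum on $\{v_1,\dots,v_m\}$. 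By the hypothesis, together with saturation (so that every $\phi$-type over a finite set is realized), $\exists\bar v\,\exists\bar z\,\Psi_m$ holds for every $m\ge d$.

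Next I would pass to the expansion of $L$ by new constants $c_\alpha$ ($\alpha<\kappa$), intended to name a domain, and $w^F_i$ (for finite $F\sse\kappa$ with $|F|\ge d$ and $i<\binom{|F|}{\le d}$), intended to name witnesses, and let $T^*$ be $T$ together with: (i) $\Psi_{|F|}(\langle c_\alpha:\alpha\in F\rangle;\langle w^F_i:i\rangle)$ for each such $F$; and (ii) for each pair $F\sse F'$ of such sets, the statement (again $L$-expressible, being a Boolean combination of $\phi$-instances) that every trace $\phi(w^F_i,\{c_\alpha:\alpha\in F\})$ coincides with some $\phi(w^{F'}_{i'},\{c_\alpha:\alpha\in F\})$ — i.e.\ the $d$-maximum system named on $F$ is the restriction of the one named on $F'$. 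To see $T^*$ is finitely satisfiable, a finite fragment mentions only $c_\alpha$ with $\alpha$ in some finite $F^*\sse\kappa$ (enlarge $F^*$ so $|F^*|\ge d$) and finitely many $w^F_i$; by the hypothesis $\phi$ traces a $d$-maximum system $\mathcal{D}$ on some subset of $\M{\y}$ of size $|F^*|$, which we transport along a bijection to interpret the $c_\alpha$ ($\alpha\in F^*$), and for each relevant $F$ we realize the $\binom{|F|}{\le d}$ types of $\mathcal{D}|^{\{c_\alpha:\alpha\in F\}}$ to interpret the $w^F_i$. Clauses (i) then hold because a restriction of a $d$-maximum system is $d$-maximum, and clauses (ii) hold because restrictions compose.

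Realizing $T^*$ inside $\monster$ yields pairwise-distinct $a_\alpha\in\M{\y}$ and $b^F_i\in\M{\x}$; put $A=\{a_\alpha:\alpha<\kappa\}$ and
$$\mathcal{C}=\{\,p\in S_\phi(A)\ :\ p\restriction\{a_\alpha:\alpha\in F\}\in\{\phi(b^F_i,\{a_\alpha:\alpha\in F\}):i\}\text{ for every finite }F\sse\kappa,\ |F|\ge d\,\}.$$
The heart of the argument is the claim that $\mathcal{C}|^{\{a_\alpha:\alpha\in F\}}=\{\phi(b^F_i,\{a_\alpha:\alpha\in F\}):i\}$ for every such $F$. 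The inclusion $\sse$ is just the definition of $\mathcal{C}$. For $\supseteq$, note first that clauses (i) and (ii) force the $F$-system to be exactly the restriction to $\{a_\alpha:\alpha\in F\}$ of the $F'$-system whenever $F\sse F'$ (both are $d$-maximum on the respective sets, hence have the same cardinality $\binom{|F|}{\le d}$, and one contains the other by (ii)). Then, starting from a fixed trace $\phi(b^F_i,\cdot)$, the $F'$-system elements extending it, as $F'$ ranges over finite supersets of $F$ with bonding maps given by restriction, form an inverse system of nonempty finite sets over a directed index set, hence have nonempty inverse limit; an element of the limit is a coherent family of finite $\phi$-traces extending $\phi(b^F_i,\cdot)$, which glues to a finitely satisfiable partial $\phi$-type over $A$, and any realization $b\in\M{\x}$ gives the desired $p=\phi(b,A)\in\mathcal{C}$.

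Granting the claim, the right-hand side is $d$-maximum on $\{a_\alpha:\alpha\in F\}$ by clause (i), so for every finite $A_0\sse A$ with $|A_0|\ge d$ we get $|\mathcal{C}|^{A_0}|=\binom{|A_0|}{\le d}$; for $|A_0|<d$ one restricts $\mathcal{C}$ through a $d$-element set, on which its trace is the full power set, to conclude $|\mathcal{C}|^{A_0}|=2^{|A_0|}=\binom{|A_0|}{\le d}$. Hence $\mathcal{C}$ is $d$-maximum, and $\mathcal{C}\in\mathcal{P}(S_\phi(A))\sse Tr_\kappa(\phi)$, as required. I expect the real obstacle to be exactly the coherence phenomenon that the coherence clauses and the inverse limit exist to handle: the finite $d$-maximum systems furnished by the hypothesis need not a priori be compatible — the ``forbidden label'' each one assigns to a given $(d+1)$-subset may vary — and the content of the lemma is that one can nonetheless thread them together into a single system that remains $d$-maximum on every finite trace.
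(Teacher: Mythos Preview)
Your proof is correct and follows the same overall strategy as the paper: the forward direction is identical (restrict a $d$-maximum system to a finite subset), and the converse is by compactness and saturation of $\monster$.

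The difference is one of detail. The paper's converse is a single sentence: ``the property of being $d$-maximum is elementary; apply compactness and saturation.'' You have unpacked what that sentence must mean. In particular, you are right that the nontrivial point is \emph{coherence}: merely knowing that every finite subset of some $\kappa$-set carries \emph{some} $d$-maximum trace does not by itself produce a single $\mathcal{C}\sse S_\phi(A)$ that is $d$-maximum, because the forbidden labels assigned by the various finite systems need not agree. Your clauses (ii) build coherence into the compactness argument, and your inverse-limit step is exactly how one recovers the global $\mathcal{C}$ from the coherent finite pieces. Both steps are correct as written (the inverse system is directed with nonempty finite fibres, and the resulting partial $\phi$-type is finitely satisfiable, hence realized by saturation). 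The paper's phrase ``$d$-maximum is elementary'' is best read as shorthand for precisely this construction; you have supplied the details it omits.
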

\begin{proof}
For the right to left direction, it is easily seen that the property of being $d$-maximum is elementary.  Apply the compactness theorem and the saturation of $\monster$.  For left to right, note that if $\mathcal{C}$ is $d$-maximum and $X' \sse X$ with $|X'|=n$, $n\geq d$, then $\mathcal{C}|^{X'}$ is $d$-maximum.
\end{proof}

As a consequence of Lemma \ref{tts}, if $\kappa$ is infinite then there exists $\mathcal{C} \in Tr_\kappa(\phi)$ which is $d$-maximum if and only if there is $\mathcal{C}' \in Tr_{\aleph_0}(\phi)$ such that $\mathcal{C}'$ is $d$-maximum. 

\begin{definition}\label{DD} 
The VC-maximum-dimension of $\phi$ is defined by
$$\VCm(\phi) := \text{max}\{d\in \omega:\exists \mathcal{C} \in Tr_{\aleph_0}(\phi)\, \text{ s.t. } \mathcal{C} \text{ is $d$-maximum}\}$$
\end{definition}

Note that the definition of VC-maximum-dimension does not use model theory.

If $\langle \bf{a}_i \rangle_{i \in I}$ is a sequence of indiscernibles and $(I,<)$ is a complete and dense linear order without endpoints, for $B \sse \langle \bf{a}_i \rangle_{i \in I}$, define $\mathbb{G}(B)$ to be the genus of $\{i \in I : \bf{a}_i \in B \} \sse I$.  With these assumptions, for $m \in \omega$, make the definition 
$$P_m(B) = \{\rho \in 2^m : \exists i_0 < \cdots < i_{m-1} \in I : \bf{a}_{i_j} \in B \iff \rho(j)=1\}$$

If $\mu \in 2^k$, $\eta \in 2^l$ and $l \leq k$, write $\eta \trianglelefteq \mu$ if $\eta$ is a subsequence of $\mu$, meaning that for some order preserving embedding $f:l \rightarrow k$ (where $k$ and $l$ are regarded as ordinals) $\forall i \in l, \mu(f(i)) = \eta(i)$.

We observe that 

\begin{equation} \label{rr}
\forall \rho \in 2^m, \rho \in P_m(B) \iff \mathbb{G}(B) \not \trianglelefteq \rho
\end{equation}
%

Assume in Lemma \ref{LLLLL} that the formula $\phi(\x,\y)$ is NIP.  

\begin{lemma}[Transfer Lemma]\label{LLLLL}
Let $\langle \bf{a}_i \rangle_{i \in I}$ be a sequence of indiscernibles, where $(I,<)$ is a complete and dense linear order without endpoints. Assume that $B \sse \langle \bf{a}_i \rangle_{i \in I}$ is defined by $\phi(\langle \bf{a}_i \rangle_{i \in I},\bf{c})$, $A \sse \langle \bf{a}_i \rangle_{i \in I}$, and $A' \sse A$ can be traced as $A' = A \cap B'$ where $B' \sse \langle \bf{a}_i \rangle_{i \in I}$ is such that $\mathbb{G}(B')=\mathbb{G}(B)$. Then, there exists $\bf{c}' \in \M{\y}$ such that $\phi(A,\bf{c}') = A'$.
\end{lemma}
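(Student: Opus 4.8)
The plan is to peel $A$ down to its finite subsets, reduce the whole statement to a compactness argument, and then run a pattern‑matching argument inside each finite subset using indiscernibility together with the characterization of genus by induced patterns, equation~(\ref{rr}).

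\emph{Step 1 (reduction to finite traces).} First I would observe that it suffices to produce, for every finite $A_0 \sse A$, a parameter $\bf{c}_0 \in \M{\y}$ with $\phi(A_0,\bf{c}_0) = A' \cap A_0$. Granting this, the partial type $p(\y) = \{\phi(\bf{a},\y):\bf{a}\in A'\}\cup\{\neg\phi(\bf{a},\y):\bf{a}\in A\setminus A'\}$ over $A$ is finitely satisfiable (any finite fragment of $p$ mentions parameters only from a single finite $A_0 \sse A$, and is realized by the corresponding $\bf{c}_0$), hence realized in $\monster$ by saturation; any realization $\bf{c}'$ satisfies $\phi(A,\bf{c}') = A'$, which is what the lemma asks for. (For the applications one may take $A$ finite at the outset, so this step is really just bookkeeping.)

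\emph{Step 2 (the finite case).} Fix finite $A_0 \sse A$ and enumerate its indices $i_0 < \cdots < i_{n-1}$ in $I$, so $A_0 = \{\bf{a}_{i_0},\ldots,\bf{a}_{i_{n-1}}\}$. Let $\rho \in 2^n$ record the membership of these points in $B'$, i.e.\ $\rho(j)=1 \iff \bf{a}_{i_j}\in B'$; then $\rho \in P_n(B')$, witnessed by $\langle i_0,\ldots,i_{n-1}\rangle$. By~(\ref{rr}) this gives $\mathbb{G}(B')\not\trianglelefteq\rho$, and since $\mathbb{G}(B')=\mathbb{G}(B)$ we get $\mathbb{G}(B)\not\trianglelefteq\rho$, so by~(\ref{rr}) again $\rho\in P_n(B)$. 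Unpacking $P_n(B)$, there are $i'_0<\cdots<i'_{n-1}$ in $I$ with $\bf{a}_{i'_j}\in B \iff \rho(j)=1$; since $B = \phi(\langle\bf{a}_i\rangle_{i\in I},\bf{c})$ this reads $\models\phi(\bf{a}_{i'_j},\bf{c}) \iff \rho(j)=1$. Now the increasing tuples $\langle\bf{a}_{i_0},\ldots,\bf{a}_{i_{n-1}}\rangle$ and $\langle\bf{a}_{i'_0},\ldots,\bf{a}_{i'_{n-1}}\rangle$ have the same type by indiscernibility, so by homogeneity of $\monster$ there is an automorphism $\sigma$ with $\sigma(\bf{a}_{i'_j}) = \bf{a}_{i_j}$ for all $j$. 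Put $\bf{c}_0 = \sigma(\bf{c})$; then $\models\phi(\bf{a}_{i_j},\bf{c}_0) \iff \models\phi(\bf{a}_{i'_j},\bf{c}) \iff \rho(j)=1 \iff \bf{a}_{i_j}\in B' \iff \bf{a}_{i_j}\in A'\cap A_0$, the last equivalence because $A_0\sse A$ and $A' = A\cap B'$. Hence $\phi(A_0,\bf{c}_0) = A'\cap A_0$, completing Step~2.

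The NIP hypothesis enters only to guarantee that $\mathbb{G}(B)$ is a genuine element of $2^{<\omega}$ — equivalently, that the $\phi$‑definable set $B$ omits some finite pattern along the sequence — so that the statement and equation~(\ref{rr}) are even meaningful; the argument itself is purely about induced patterns and indiscernibility. I expect the only real content to be Step~2: the observation that equality of genus forces $B$ and $B'$ to induce exactly the same finite patterns (this is precisely~(\ref{rr})), followed by transporting a witnessing configuration for a pattern of $B$ back onto the chosen enumeration of $A_0$ via indiscernibility. Everything else is routine compactness.
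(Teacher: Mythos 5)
Your proposal is correct and follows essentially the same route as the paper: reduce to finite $A$ by compactness/saturation, use equation~(\ref{rr}) and $\mathbb{G}(B')=\mathbb{G}(B)$ to see that the membership pattern $\rho$ of $A'$ in $A_0$ is realized by $B$ somewhere along the sequence, and then transfer the witnessing parameter back to $A_0$ by indiscernibility. The only cosmetic difference is that you move the parameter via an automorphism from homogeneity, whereas the paper packages the transfer as the formula $\exists \y \bigwedge_j \phi(\bf{a}_{k_j},\y)^{\rho(j)}$ and invokes equality of types of increasing tuples; these are interchangeable.
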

\begin{proof}
First consider the case in which $A$ is finite.

Let $m = |A|$, and suppose $\bf{a}_{i_0} < \cdots < \bf{a}_{i_{m-1}}$ is an enumeration of $A$.  
Since $\mathbb{G}(B')=\mathbb{G}(B)$, we have $P_m(B)=P_m(B')$ by (\ref{rr}). 
 Let $\rho \in P_m(B')$ be such that for each $j=0,\ldots,m-1$, $\bf{a}_{i_j} \in A' \iff \rho(j)=1$.  Then $\rho \in P_m(B)$, and for some $\bf{a}_{k_0} < \cdots < \bf{a}_{k_{m-1}}$ in $\langle \bf{a}_i \rangle_{i \in I}$, 

$$\monster \models \bigwedge_{j=0}^{m-1} \phi(\bf{a}_{k_j},\bf{c})^{\rho(j)}$$
and thus
$$\monster \models \exists \y \bigwedge_{j=0}^{m-1} \phi(\bf{a}_{k_j},\y)^{\rho(j)}$$
Then, by indiscernibility, 
$$\monster \models \exists \y \bigwedge_{j=0}^{m-1} \phi(\bf{a}_{i_j},\y)^{\rho(j)}$$
and the witnessing $\bf{c}'$ is the desired parameter.

The case in which $A$ is infinite now follows by compactness and the saturation of $\monster$.
\end{proof}

\begin{theorem} \label{bigT}
For any $\phi(\x,\y)$, $\VCm(\phi) = \VCId(\phi)$. 
\end{theorem}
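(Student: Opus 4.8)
The plan is to prove the two inequalities $\VCm(\phi)\leq\VCId(\phi)$ and $\VCId(\phi)\leq\VCm(\phi)$ separately. The first is a soft argument showing that a $d$-maximum trace of $\phi$, which \emph{a priori} may live only on a very non-homogeneous parameter set, can always be re-found on an indiscernible sequence, by combining compactness with the standard extraction of indiscernibles. The second is a counting argument which, assuming $\mathrm{NIP}$, assigns a genus to each instance of $\phi$ on a completed indiscernible sequence and then reads off an upper bound on the number of $\phi$-types using Theorem~\ref{T1}, while the Transfer Lemma (Lemma~\ref{LLLLL}) simultaneously certifies that each genus which occurs forces $\VCm(\phi)$ to be large enough for that bound to be optimal. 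Where needed I apply Theorem~\ref{T1} and Lemma~\ref{LLLLL} to the partition of $\phi$ in which the variable serving as the ground set comes first; this causes no difficulty.

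For $\VCm(\phi)\leq\VCId(\phi)$ it suffices to show that whenever $\phi$ traces a $d$-maximum set system, $\VCId(\phi)\geq d$. By Lemma~\ref{tts}, for every $n$ there is an $n$-element set $A_n\subseteq\M{\y}$ carrying a $d$-maximum subfamily $\mathcal C_n\subseteq S_\phi(A_n)$. For each $m$ let $\Psi_m(x_1,\dots,x_m)$ be the formula over $\emptyset$ expressing ``$\{x_1,\dots,x_m\}$ carries a $d$-maximum subfamily of $S_\phi$''; this is first order, since $d$-maximality of a family on an $m$-element ground set is a finite Boolean condition on the associated $\phi$-patterns and one quantifies existentially over the $\binom{m}{\leq d}$ parameters from $\M{\x}$ realizing such a family. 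Because a restriction of a $d$-maximum family is $d$-maximum, $\Psi_m$ holds of every $m$-element subset of every $A_n$ with $n\geq m$, so the type $\{\Psi_m:m<\omega\}$ in the variables $(x_i)_{i<\omega}$ is finitely satisfiable; let $(\mathbf a_i)_{i<\omega}$ in $\M{\y}$ realize it. By the standard Ramsey extraction there is an indiscernible sequence $(\mathbf b_i)_{i<\omega}$ such that every formula true of all increasing tuples of $(\mathbf a_i)$ is true of all increasing tuples of $(\mathbf b_i)$; in particular each $\Psi_m$ holds of all increasing $m$-tuples of $(\mathbf b_i)$. Hence for every $N$ the finite indiscernible sequence $\langle\mathbf b_0,\dots,\mathbf b_{N-1}\rangle$ has $|S_\phi(\{\mathbf b_0,\dots,\mathbf b_{N-1}\})|\geq\binom{N}{\leq d}$, which for large $N$ exceeds $KN^r$ for any fixed $K$ and any $r<d$; therefore $\VCId(\phi)\geq d$.

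For $\VCId(\phi)\leq\VCm(\phi)$ we may assume $\phi$ is $\mathrm{NIP}$, since otherwise $\VCm(\phi)=\infty$ and there is nothing to prove; then $\VCm(\phi)$ is finite. Let $\bar{\mathbf b}=\langle\mathbf b_i:i<N\rangle$ be a finite indiscernible sequence in $\M{\y}$. By compactness and saturation, stretch it to an indiscernible sequence $\langle\mathbf a_i\rangle_{i\in I}$ with $(I,<)$ a complete dense linear order without endpoints and $A:=\mathrm{range}(\bar{\mathbf b})\subseteq\langle\mathbf a_i\rangle_{i\in I}$. Since $\phi$ has finite alternation rank, for every $\mathbf e\in\M{\x}$ the set $B_{\mathbf e}:=\phi(\mathbf e,\langle\mathbf a_i\rangle_{i\in I})$ is a union of boundedly many convex pieces of $\langle\mathbf a_i\rangle_{i\in I}$, so $\mathbb G(B_{\mathbf e})$ is defined and the set $E:=\{\mathbb G(B_{\mathbf e}):\mathbf e\in\M{\x}\}$ is a finite subset of $2^{\leq L}$ for some $L$ depending only on $\phi$. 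For $\eta\in E$, Theorem~\ref{T1} gives that $\mathcal D_\eta:=\{C\subseteq\langle\mathbf a_i\rangle_{i\in I}:\mathbb G(C)=\eta\}$ is $(|\eta|-1)$-maximum, and the Transfer Lemma gives $\mathcal D_\eta|^{A_0}\subseteq S_\phi(A_0)$ for every finite $A_0\subseteq\langle\mathbf a_i\rangle_{i\in I}$; as $\mathcal D_\eta|^{A_0}$ is again $(|\eta|-1)$-maximum, Lemma~\ref{tts} forces $|\eta|-1\leq\VCm(\phi)$. Finally, since $\phi(\mathbf e,A)=B_{\mathbf e}\cap A\in\mathcal D_{\mathbb G(B_{\mathbf e})}|^A$, grouping parameters by the genus of $B_{\mathbf e}$ yields
$$|S_\phi(A)|\ \leq\ \sum_{\eta\in E}\binom{N}{\leq|\eta|-1}\ \leq\ |E|\cdot\binom{N}{\leq\VCm(\phi)}\ =\ O\big(N^{\VCm(\phi)}\big),$$
whence $\VCId(\phi)\leq\VCm(\phi)$.

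I expect the first inequality to be the main obstacle, the crucial observation being that $d$-maximality, though it refers to an infinite family, is a \emph{first-order property of the ground set} once $d$ and the finite ground-set size are fixed; this is exactly what lets a $d$-maximum trace pass through both the compactness step and the extraction of indiscernibles. In the second inequality the only points requiring care are to complete the index order before invoking genus (so that boundary points, and hence $\mathbb G$, make sense) and to use $\mathrm{NIP}$ to bound the number of genera that can arise; the Transfer Lemma then does double duty, simultaneously bounding $|S_\phi(A)|$ and witnessing, for each occurring $\eta$, a $(|\eta|-1)$-maximum trace of $\phi$.
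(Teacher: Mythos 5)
Your proof is correct and takes essentially the same route as the paper's: the inequality $\VCm(\phi)\leq\VCId(\phi)$ is the paper's compactness/Ramsey extraction of an indiscernible sequence still carrying a $d$-maximum trace (your observation that $d$-maximality of a finite trace is first-order is exactly what makes the paper's one-line appeal to Ramsey and saturation work), and the inequality $\VCId(\phi)\leq\VCm(\phi)$ uses the identical toolkit of NIP/finite alternation, genus, Theorem \ref{T1}, the Transfer Lemma, and Lemma \ref{tts}. The only difference is organizational: you sum $|S_\phi(A)|\leq\sum_{\eta\in E}\binom{N}{\leq |\eta|-1}$ directly over the finitely many occurring genera, whereas the paper runs the same count inside a proof by contradiction to extract a single set of genus length exactly $d+1$; this is a repackaging, not a different argument.
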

\begin{proof}

First, note that we always have $\VCm(\phi) \geq 0$ and $\VCId(\phi) \geq 0$.

Now suppose $\VCId(\phi) \geq d$, for some positive $d \in \omega$. Let $0 < \epsilon < 1/2$. By compactness, Ramsey's theorem, and saturation of the monster model, there is some indiscernible sequence $\langle \bf{a}_i \rangle_{i \in \mathbb{R}}$ such that $|S_\phi(A)|\geq |A|^{d-\epsilon}$ for arbitrarily large finite $A \sse \langle \bf{a}_i \rangle_{i \in \mathbb{R}}$.  

\textbf{Claim:}  $\exists B \in S_\phi(\langle \bf{a}_i \rangle_{i \in \mathbb{R}})$ with $lg(\mathbb{G}(B)) ={d+1}$.\\
First we argue that there is $B \in S_\phi(\langle \bf{a}_i \rangle_{i \in \mathbb{R}})$ with $lg(\mathbb{G}(B))\geq d+1$. Suppose, to the contrary, that $\forall B \in S_\phi(\langle \bf{a}_i \rangle_{i \in \mathbb{R}})$, we have $lg(\mathbb{G}(B))\leq d$.  
Let $A \sse \langle \bf{a}_i \rangle_{i \in \mathbb{R}}$ be finite, and consider an arbitrary $B \in S_\phi(\langle \bf{a}_i \rangle_{i \in \mathbb{R}})$.  
There are $2^{d+1}-1$ possibilities for $\mathbb{G}(B)$. For any choice of $\mathbb{G}(B)$, by Theorem \ref{T1}, 
$$|\{C \cap A : C \in S_\phi(\langle \bf{a}_i \rangle_{i \in \mathbb{R}}),\, \mathbb{G}(C) = \mathbb{G}(B)\}| \leq {{|A|}\choose{\leq d-1}}$$  
These two facts imply that $|S_\phi(A)| \leq (2^{d+1}-1) \cdot {{|A|}\choose{\leq d-1}} = O(|A|^{d-1})$. Because this holds for any finite $A\sse \langle \bf{a}_i \rangle_{i \in \mathbb{R}}$, the hypothesis on $\langle \bf{a}_i \rangle_{i \in \mathbb{R}}$ is violated.


We now assume $B \in S_\phi(\langle \bf{a}_i \rangle_{i \in \mathbb{R}})$, and $lg(\mathbb{G}(B))\geq d+1$. Without loss of generality, $\phi$ is NIP, because otherwise we have $\VCm(\phi) = \VCId(\phi) = \infty$. Under these assumptions, $\mathbb{G}(B) = n \geq d+1$ for some $n \in \omega$. Inducting on $n$, it follows by compactness and saturation of the monster that there is some $B' \in S_\phi(\langle \bf{a}_i \rangle_{i \in \mathbb{R}})$, with $lg(\mathbb{G}(B'))= d+1$.
 This proves the claim.

Now take $B$ as in the claim.  It follows from the Transfer Lemma that on any finite $A \sse \langle \bf{a}_i \rangle_{i \in \mathbb{R}}$,  $$\{B' \cap A: B'\sse \langle \bf{a}_i \rangle_{i \in \mathbb{R}}, \mathbb{G}(B')=\mathbb{G}(B)\} \in Tr(\phi,A)$$
This implies, by Theorem  \ref{T1}, that $\phi$ traces arbitrarily large $d$-maximum set systems, and, by Lemma \ref{tts}, $Tr_{\aleph_0}(\phi)$ contains a $d$-maximum set system.  Thus $\VCm(\phi) \geq d$.

To show the other direction, suppose $\VCm(\phi) \geq d$.  By compactness, saturation, and Ramsey's theorem (or, alternatively, by Erd\"os-Rado) there is an infinite indiscernible sequence $A = \langle \bf{a}_i \rangle_{i \in \omega}$ on which $\phi$ traces a $d$-maximum set system.  It follows from the definition of $d$-maximum that $S_\phi(A)$ witnesses that $\VCId(\phi) \geq d$.
\end{proof}

It should be noted that Guingona and Hill prove that $\VCId(\phi)$ is equal to several other invariants, among which $\VCm(\phi)$ may obviously be included.

An immediate corollary is the following.

\begin{corollary} \label{C1}
For any formula $\phi(\x,\y)$, $\VCd(\phi) = \VCId(\phi)$ if and only if $\phi$ traces an infinite $d$-maximum set system, where $d = \VCd(\phi)$.
\end{corollary}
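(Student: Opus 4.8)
The plan is to derive Corollary \ref{C1} directly from Theorem \ref{bigT} together with the standard relationship between $\VCd$ and $\VCId$. Recall first that $\VCd(\phi)$ is always a nonnegative integer when $\phi$ is NIP (and $\infty$ otherwise), by the results of \cite{AsDoHaMaSt11} — so setting $d = \VCd(\phi)$ makes sense, and we may assume $\phi$ is NIP throughout, since in the $\IP$ case both $\VCd(\phi)$ and $\VCId(\phi)$ equal $\infty$ and the biconditional is vacuous (no formula can trace an infinite $\infty$-maximum system, as the notion is only defined for $d \in \omega$; and indeed the statement should be read with $d = \VCd(\phi) \in \omega$ implicitly). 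We also use the inequalities $\VCId(\phi) \leq \VCm(\phi)$ trivially from Definition \ref{DD}, in fact equality by Theorem \ref{bigT}, and $\VCId(\phi) \leq \VCd(\phi)$, which holds because every finite subset of an indiscernible sequence is in particular a finite parameter set.

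First I would prove the forward direction. Suppose $\VCd(\phi) = \VCId(\phi) = d$. By Theorem \ref{bigT}, $\VCm(\phi) = \VCId(\phi) = d$, so by Definition \ref{DD} there is some $\mathcal{C} \in Tr_{\aleph_0}(\phi)$ which is $d$-maximum. Since $Tr_{\aleph_0}(\phi) = \bigcup_{A \in [\M{\y}]^{\aleph_0}} Tr(\phi,A)$, this $\mathcal{C}$ lives on a countably infinite parameter set, hence is an infinite $d$-maximum set system traced by $\phi$, as required.

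For the converse, suppose $\phi$ traces an infinite $d$-maximum set system, with $d = \VCd(\phi)$. I need to produce such a system on an \emph{indiscernible} parameter set so as to feed it into $\VCm$. Here is the one point requiring a small argument: being traced by $\phi$ on an infinite set, together with $d$-maximality being an elementary (indeed first-order expressible in the natural multi-sorted sense) property as noted in the proof of Lemma \ref{tts}, lets us extract by compactness, saturation of $\monster$, and Ramsey's theorem an \emph{indiscernible} sequence $\langle \bf{a}_i \rangle_{i \in \omega}$ on which $\phi$ traces a $d$-maximum set system — this is exactly the move made at the end of the proof of Theorem \ref{bigT}. Concretely, since $\mathcal{C}$ is $d$-maximum on an infinite set it is $d$-maximum on arbitrarily large finite subsets, so $\phi$ traces arbitrarily large finite $d$-maximum systems; apply Ramsey to the parameters realizing these finite systems to homogenize, then compactness to get the infinite indiscernible sequence. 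Then $\mathcal{C} \in Tr_{\aleph_0}(\phi)$ is $d$-maximum, so $\VCm(\phi) \geq d$, whence $\VCId(\phi) \geq d$ by Theorem \ref{bigT}. Combined with $\VCId(\phi) \leq \VCd(\phi) = d$ this gives $\VCId(\phi) = d = \VCd(\phi)$.

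The proof is essentially a bookkeeping exercise once Theorem \ref{bigT} is in hand; the only mild obstacle is making sure the ``infinite $d$-maximum system traced by $\phi$'' in the hypothesis can be transported to an indiscernible set, but as observed this is precisely the Ramsey-plus-compactness argument already carried out in the last paragraph of the proof of Theorem \ref{bigT}, so it can simply be cited. I would also remark explicitly that the equivalence is only interesting when $\VCd(\phi) < \infty$, i.e. $\phi$ is NIP, and that in that regime the statement is genuinely a characterization of the equality $\VCd(\phi) = \VCId(\phi)$ answering the Guingona--Hill question.
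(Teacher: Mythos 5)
Your derivation is correct and is essentially the paper's intended argument: the paper states Corollary \ref{C1} with no proof as an immediate consequence of Theorem \ref{bigT}, and your reduction (both directions factor through $\VCm(\phi)$ via Definition \ref{DD}, plus the trivial inequality $\VCId(\phi)\leq\VCd(\phi)$) is exactly that. Two small points: the Ramsey/indiscernibility detour in your converse is unnecessary, since $\VCm$ is defined purely in terms of $Tr_{\aleph_0}(\phi)$ with no reference to indiscernibles, so the hypothesis plus Lemma \ref{tts} (to pass from an arbitrary infinite domain to a countable one) already gives $\VCm(\phi)\geq d$ directly; and your side claim that $\VCd(\phi)$ of an NIP formula is always a nonnegative integer is false (VC-density can be non-integral), so the correct reading is that the corollary implicitly presupposes $\VCd(\phi)=d\in\omega$ in order for ``$d$-maximum'' to make sense, rather than that this holds automatically.
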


This condition is easier to use in practice than a direct appeal to a nonconstructive combinatorial principle such as the Ramsey or Erd\"os-Rado theorem.  We give an algebraic example in the theory of real closed fields (RCF).  Though we make efforts to be precise in the following statements, we are just considering a definable family that results from a polynomial inequality where the coefficients form the parameter set.

Without loss we work in $\mathbb{R}$.  Let $\y$ be a finite sequence of variables. For a given $m \in \omega$, let $Y_m$ denote the set of monomials which occur in the general polynomial of degree $m$ with variables $\y$. Consider a family of polynomials of the form
$$p(c_1,\ldots,c_d,\y) = u_0(\y) + c_1 u_1(\y) + \cdots c_{d} u_d(\y)$$
where for each $i = 0,\ldots,d$, $u_i(\y) \in Y_m$ and for each $i = 1,\ldots,d$, $c_i \in \mathbb{R}$.
Define $\mathcal{C} = \{\text{pos}(p(\bf{c},\y)) : \bf{c} \in \mathbb{R}^d \ \}$, where $\text{pos}(p(\y)) = \{\bf{a} \in \mathbb{R^{|\y|}}: p(\bf{a}) \geq 0\}$. 

Such a $\mathcal{C}$ can clearly be traced by some $\phi(\x,\y) = p(\x,\y) \in L_{ring}$, in $\mathbb{R}\models \text{RCF}$. It is known (see Floyd \cite{Fl89}, section 8.1) that for such a $\phi$, we have $\VCm(\phi) \geq d$.  As it is well-known that $\VCd(\phi) = d$, we have $\VCId(\phi) = \VCd(\phi)$ for polynomial inequalities $\phi$.

Floyd's result is based on an application of Dudley's theorem (\cite{Du99}, Theorem 4.2.1), which can apply to somewhat more general situations (see Ben-David and Litman \cite{BL98}).

Many familiar geometric families, such as circles, ellipses, positive half-spaces, hyperbolas, etc., therefore have $\VCI$-density equal to $\VC$-density.  The above approach notably does not apply to geometric families which are not polynomial definable (in the above sense) such as axis-parallel rectangles, or convex $d$-gons.

\subsection{Stability}
Here we show how to characterize the stability of $\phi$ using the maximum systems in $Tr(\phi,A)$.  For a review of the relevant notions from stability theory see \cite{H97}.

Recall that the \emph{ladder dimension} of a formula $\phi(\x,\y)$ is defined by writing $\LD(\phi) \geq n$ if and only if there are $\bf{a}_0,\ldots,\bf{a}_{n-1}$ in $\M{\x}$ and $\bf{b}_0,\ldots,\bf{b}_{n-1}$ in $\M{\y}$ such that
$\monster \models \phi(\bf{a}_i,\bf{b}_j) \iff i < j$. Finite ladder dimension is equivalent to stability for formulas.  The VC-dimension can be thought of as a generalization of ladder dimension, and in general $\coVC(\phi) \leq \LD(\phi)$, where $\coVC(\phi)$ denotes the $\VC$-dimension of $S_\phi(\M{\y})$ conceived as a set family.

For a set $X$ and $\mathcal{C} \sse \mathcal{P}(X)$, define a graph $\mathcal{G}_\mathcal{C} = (V,E)$ where $V=\mathcal{C}$ and $E(C_1,C_2)$ holds if and only if $|C_1 \Delta C_2| = 1$.  For $C_1, C_2 \in \mathcal{C}$, define $dist_h(C_1,C_2)$ to be the Hamming distance $|C_1 \Delta C_2|$, and $dist_{\mathcal{G}_\mathcal{C}}(C_1,C_2)$ to be the graph distance in $\mathcal{G}_\mathcal{C}$, with $dist_{\mathcal{G}_\mathcal{C}}(C_1,C_2) = \infty$ if $C_1$ and $C_2$ belong to different components.

 The following was proved by Warmuth and Kuzmin \cite{KW}, (Lemma 14).  
\begin{lemma} \label{L3}
Let $X$ be a finite set.  Suppose $\mathcal{C} \sse \mathcal{P}(X)$ is $d$-maximum, and $C_1, C_2 \in \mathcal{C}$. Then
$$dist_h(C_1,C_2) = dist_{\mathcal{G}_\mathcal{C}}(C_1,C_2)$$ 
In particular, $\mathcal{G}_\mathcal{C} $ is connected.
\end{lemma}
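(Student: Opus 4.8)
The plan is to prove the nontrivial inequality $dist_{\mathcal{G}_\mathcal{C}}(C_1,C_2) \leq dist_h(C_1,C_2)$ (the reverse inequality is immediate, since any edge in $\mathcal{G}_\mathcal{C}$ changes the Hamming weight by exactly one, so a path of length $\ell$ connects sets at Hamming distance at most $\ell$; and in fact the parity forces equality once we have the bound). So fix $C_1, C_2 \in \mathcal{C}$ with $dist_h(C_1,C_2) = k$, and we want to produce a path in $\mathcal{G}_\mathcal{C}$ of length $k$. The natural idea is induction on $k$: it suffices to find some $C' \in \mathcal{C}$ with $E(C_1, C')$ (i.e. $|C_1 \Delta C'| = 1$) and $dist_h(C', C_2) = k - 1$. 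In other words, we need to flip one element of the symmetric difference $D := C_1 \Delta C_2$ (note $|D| = k$) in a way that stays inside $\mathcal{C}$ and moves us strictly closer to $C_2$.

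First I would restrict attention to the trace $\mathcal{C}|^D$, which by the remark after Sauer's Lemma (using $A \sse X$, $\mathcal{C}$ $d$-maximum implies $\mathcal{C}|^A$ is $d$-maximum) is itself a $d'$-maximum system on $D$ for some $d' \leq d$; moreover $C_1 \cap D$ and $C_2 \cap D$ are complementary subsets of $D$ (one is $\emptyset$ relative to $D$ after translating by $C_1 \cap C_2$, the other is all of $D$), so they are at Hamming distance exactly $|D| = k$ within the smaller system, and it is enough to connect them there — any path in $\mathcal{G}_{\mathcal{C}|^D}$ lifts coordinatewise to a path in $\mathcal{G}_\mathcal{C}$ of the same length, because edges are determined by single-coordinate flips and the flipped coordinates lie in $D$. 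After translating by $C_1 \cap C_2$ (a symmetry of the Hamming cube that preserves the property of being $d$-maximum and the graph $\mathcal{G}$), we reduce to the following clean statement: if $\mathcal{C} \sse \mathcal{P}(D)$ is $d$-maximum and contains both $\emptyset$ and $D$, then $dist_{\mathcal{G}_\mathcal{C}}(\emptyset, D) = |D|$.

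Now for the inductive step in this reduced setting: I want to find $a \in D$ with $\{a\} \in \mathcal{C}$ and with $D$ still connected to $\{a\}$ at Hamming distance $|D| - 1$ inside $\mathcal{C}$ — or, dually, to find $a \in D$ with $D \setminus \{a\} \in \mathcal{C}$ and $\emptyset$ connected to it. The key input is the structure theory of maximum systems, specifically that for $|D| = d+1$ a $d$-maximum system on $D$ misses exactly one subset (the forbidden label), so all of $\{a\}$ for $a \in D$ are present and the claim is a direct check; for $|D| > d+1$ one argues that some "shortening" of $D$ by one coordinate lands in $\mathcal{C}$, using that $\mathcal{C}$ shatters every $d$-subset and a counting/exchange argument on the forbidden labels (Proposition \ref{Pf}). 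Alternatively — and this is probably the cleanest route — I would induct using the reduction $\mathcal{C} \rightsquigarrow \mathcal{C}^a$ from the proof of Theorem \ref{T1}: for $a = \max D$, the "link" $\mathcal{C}^a = \{C \in \mathcal{C}|^{D \setminus \{a\}} : C, C \cup \{a\} \in \mathcal{C}\}$ is $(d-1)$-maximum and $\mathcal{C}|^{D\setminus\{a\}}$ is $d$-maximum, and one tracks how $\emptyset$ and $D$ sit relative to this decomposition, peeling off coordinate $a$ when it helps and recursing into the smaller system otherwise.

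The main obstacle is the inductive step: showing that we can always flip a coordinate of $D$ and remain in $\mathcal{C}$ while making progress. It is not automatic that an arbitrary single-element neighbor of $C_1$ in the Hamming cube lies in $\mathcal{C}$, so the argument must genuinely use maximality — either via Floyd's forbidden-label characterization (Proposition \ref{Pf}) to certify membership of the candidate set, or via the Welzl-style shifting/link decomposition to set up a clean double induction on $|D|$ and $d$. I expect the cleanest writeup to mirror the inductive architecture already used for Theorem \ref{T1}, so that the connectivity and the distance formula fall out together; the parity observation then upgrades the inequality to the stated equality, and connectivity of $\mathcal{G}_\mathcal{C}$ follows since $dist_h$ is always finite.
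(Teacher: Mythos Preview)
The paper does not actually supply a proof of this lemma; it simply attributes the result to Kuzmin and Warmuth \cite{KW}, Lemma~14. So there is no in-paper argument to compare against, and your proposal should be judged on its own.

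Your primary reduction has a real gap. The claim that ``any path in $\mathcal{G}_{\mathcal{C}|^D}$ lifts coordinatewise to a path in $\mathcal{G}_{\mathcal{C}}$ of the same length'' is false as stated: a single edge in the trace need not lift to an edge in $\mathcal{C}$ with the prescribed part outside $D$. Concretely, take $X=\{1,2,3,4,5\}$, $\mathcal{C}=[X]^{\leq 2}$ (which is $2$-maximum), $C_1=\{1,2\}$, $C_2=\{1,3\}$, so $D=\{2,3\}$ and $E=C_1\setminus D=\{1\}$. In $\mathcal{C}|^D=\mathcal{P}(\{2,3\})$ there is the geodesic $\{2\}\to\{2,3\}\to\{3\}$, but the lift $\{1,2\}\to\{1,2,3\}\to\{1,3\}$ fails because $\{1,2,3\}\notin\mathcal{C}$. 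Some other geodesic does lift (namely $\{2\}\to\emptyset\to\{3\}$), but proving that \emph{some} geodesic always lifts is exactly the content of the lemma, so the reduction to $\mathcal{C}|^D$ is circular as written. The sentence ``because edges are determined by single-coordinate flips and the flipped coordinates lie in $D$'' only shows that a lift, \emph{if it stays in $\mathcal{C}$}, has the right length; it does not show the lift stays in $\mathcal{C}$.

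Your alternative route---the shifting/link decomposition $\mathcal{C}\rightsquigarrow(\mathcal{C}|^{X\setminus\{a\}},\mathcal{C}^a)$ with a double induction on $|X|$ and $d$---is the correct idea and is in fact how Kuzmin and Warmuth prove it. If you develop that line (choose $a\in C_1\Delta C_2$; use that $\mathcal{C}|^{X\setminus\{a\}}$ is $d$-maximum and $\mathcal{C}^a$ is $(d-1)$-maximum; handle the two cases according to whether $C_1\Delta\{a\}\in\mathcal{C}$), you will get a clean proof. As it stands, though, the proposal relies on the faulty lifting step for its main argument and only gestures at the working one.
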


The equivalence of $dist_h$ and $dist_{\mathcal{G}_\mathcal{C}}$ is clearly still true when $X$ is infinite, though the graph $\mathcal{G}_\mathcal{C}$ will not be connected in general.  In fact, in many natural maximum set systems (for example, open intervals on a densely ordered set), $\mathcal{G}_\mathcal{C}$ is totally disconnected.  

\begin{theorem}\label{TT}
For $\phi(\x,\y)$ a formula, $\phi$ is stable with $\LD(\phi) \leq n$ if and only if for every $A \sse \M{y}$ and every $\mathcal{C} \in Tr(\phi,A)$ which is $d$-maximum for some $d \in \omega$, for any $C_1,C_2 \in \mathcal{C}$, $|C_1 \setminus C_2| \leq n$. 
\end{theorem}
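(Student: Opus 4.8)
The plan is to show that the largest one-sided Hamming distance $|C_1\setminus C_2|$ occurring between members of a maximum set system traced by $\phi$ is controlled by the ladder dimension $\LD(\phi)$, through two complementary moves: a geodesic inside such a system unwinds into a ladder for $\phi$, and a ladder for $\phi$ winds up into such a system. No appeal to NIP is needed.

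\emph{Only if.} Suppose $\phi$ is stable with $\LD(\phi)\le n$; let $\mathcal{C}\in Tr(\phi,A)$ be $d$-maximum and $C_1,C_2\in\mathcal{C}$ (nothing to prove if $C_1\subseteq C_2$). Since $|C_1\setminus C_2|$ is witnessed by finitely many parameters, replace $A$ by a finite $A_0$ with $C_1\Delta C_2\subseteq A_0\subseteq A$ and $\mathcal{C}$ by $\mathcal{C}|^{A_0}$: this remains $d$-maximum (cf.\ the proof of Lemma~\ref{tts}) and a subfamily of $S_\phi(A_0)$, and $|C_1\setminus C_2|$ is unchanged, so we may assume $A$ finite. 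By Lemma~\ref{L3} there is a geodesic $C_1=D_0,D_1,\dots,D_k=C_2$ in $\mathcal{G}_\mathcal{C}$ with $k=|C_1\Delta C_2|$; since the path has exactly $|C_1\Delta C_2|$ edges, each element of $C_1\Delta C_2$ is flipped once along it, so every member of $C_1\setminus C_2$ is deleted at its own step. Enumerate $C_1\setminus C_2$ as $\bf{b}_1,\dots,\bf{b}_m$ in the order in which they are deleted, let $E_i\in\mathcal{C}$ be the state just after $\bf{b}_i$ is deleted, and choose $\bf{a}_i\in\M{\x}$ with $\phi(\bf{a}_i,A)=E_i$ (possible since $E_i\in\mathcal{C}\subseteq S_\phi(A)$). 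Then $\monster\models\phi(\bf{a}_i,\bf{b}_j)$ iff $\bf{b}_j\in E_i$ iff $\bf{b}_j$ is deleted strictly later than $\bf{b}_i$ iff $i<j$, so $\langle\bf{a}_i\rangle_{i\le m}$ and $\langle\bf{b}_j\rangle_{j\le m}$ form a ladder of length $m$; hence $|C_1\setminus C_2|=m\le\LD(\phi)\le n$.

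\emph{If.} Argue contrapositively: assuming $\phi$ is unstable or $\LD(\phi)>n$, we must produce a maximum trace of one-sided width exceeding $n$. In either case fix a ladder $\bf{a}_0,\dots,\bf{a}_{\ell-1}$, $\bf{b}_0,\dots,\bf{b}_{\ell-1}$ with $\monster\models\phi(\bf{a}_i,\bf{b}_j)\iff i<j$, the length $\ell$ being unbounded when $\phi$ is unstable and at least $n+1$ in general. Put $A=\{\bf{b}_1,\dots,\bf{b}_{\ell-1}\}$ and $p_i={tp}_\phi(\bf{a}_i/A)$; as a subset of $A$, $p_i=\{\bf{b}_j\in A:i<j\}$, so $A=p_0\supsetneq p_1\supsetneq\cdots\supsetneq p_{\ell-1}=\emptyset$ is a strictly descending chain meeting every cardinality $\ell-1,\ell-2,\dots,0$. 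Hence for every finite $A'\subseteq A$ the family $\{p_i\cap A'\}$ assumes exactly $|A'|+1$ values, so $\mathcal{C}=\{p_0,\dots,p_{\ell-1}\}$ is $1$-maximum (immediate, and an instance of Theorem~\ref{T1} since $\mathcal{C}$ is the trace on $A$ of a chain of convex sets); it lies in $Tr(\phi,A)$ by construction and $|p_0\setminus p_{\ell-1}|=|A|=\ell-1$. When $\phi$ is unstable this gives maximum traces of arbitrarily large one-sided width, and in general one of width $\LD(\phi)-1$.

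\emph{The main obstacle.} Combined, the two moves give $|C_1\setminus C_2|\le\LD(\phi)$ for every maximum trace and, conversely, a maximum trace of one-sided width $\LD(\phi)-1$ (or of unbounded width, when $\phi$ is unstable), so they determine $\LD(\phi)$ only up to one. The sharpening needed for the equivalence as stated is, in the stable case with $\LD(\phi)=n+1$ exactly, to boost the flag above from width $n$ to width $n+1$; equivalently, to realise $\LD(\phi)$ by a ladder carrying one extra rung at the bottom --- a parameter $\bf{a}$ with $\monster\models\phi(\bf{a},\bf{b}_j)$ for all the $\bf{b}_j$ --- so that the associated flag contains the trace of the whole ground set as a $\phi$-type. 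Pinning this down, together with the routine checks that $\mathcal{C}|^{A_0}$ remains $d$-maximum so Lemma~\ref{L3} applies and that each $p_i$ is a genuine $\phi$-type over $A$, is where the real work sits; the surrounding combinatorics --- a full flag is $1$-maximum, a geodesic in $\mathcal{G}_\mathcal{C}$ unwinds into a ladder --- is elementary.
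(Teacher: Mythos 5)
Your argument follows the same route as the paper's own proof in both directions. For the ``only if'' direction you reduce to finite $A$, take the geodesic supplied by Lemma~\ref{L3}, and unwind it into a ladder by recording deletion times; this is exactly the paper's argument, with the bookkeeping (each element of $C_1\Delta C_2$ is flipped exactly once, realizations of the types $E_i$ supply the $\M{\x}$-side of the ladder) carried out correctly and in more detail than the paper gives. For the converse you form the descending chain of $\phi$-types coming from a ladder, which is again the paper's construction $\mathcal{C}=S_\phi(A)|_B$; your variant, which drops $\bf{b}_0$ from the ground set so that the top of the chain is all of $A$, is if anything cleaner, and your verification that the chain is $1$-maximum is sound.

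The ``main obstacle'' you flag is, however, a genuine gap --- and it is not one that the paper closes while you fail to: the paper's proof elides exactly the same point. A ladder of length $\ell$ yields a chain whose largest member has cardinality $\ell-1$, so the construction only certifies a maximum trace of one-sided width $\LD(\phi)-1$. The paper's assertion ``$|A\setminus\emptyset|=|A|=n+1$'' presupposes that the full set $A$ occurs as a type in $S_\phi(A)|_B$, which would require an extra parameter $\bf{b}$ with $\models\phi(\bf{b},\bf{a}_j)$ for every $j$, i.e.\ an $(n{+}2)$-nd rung that $\LD(\phi)>n$ does not provide; indeed, as the paper defines it, $S_\phi(A)|_B$ has only $n+1$ traces on the $(n{+}1)$-element set $A$ and so is not even $1$-maximum there. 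Moreover, no further cleverness will close the gap, because under the paper's stated convention for $\LD$ the equivalence is off by one: for $\phi(x,y):=(x=y)$ one checks $\LD(\phi)=2$ (take $a_0=b_1$ with $a_1,b_0$ new), yet $S_\phi(A)$ consists of $\emptyset$ and singletons, so every maximum trace satisfies $|C_1\setminus C_2|\le 1$ and the right-hand condition holds with $n=1$ while the left fails. What your two moves actually prove is that the maximal one-sided width over maximum traces lies in $\{\LD(\phi)-1,\LD(\phi)\}$, and both values occur for different formulas (compare $x=y$ with a family tracing a full chain), so the clean qualitative statement --- $\phi$ is stable iff the one-sided widths of its maximum traces are uniformly bounded --- is what survives; the exact numerical equivalence as printed should not be expected to be repairable.
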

\begin{proof}
First suppose $\phi(\x,\y)$ is a stable formula with $\LD(\phi) \leq n$, and  $A \sse \M{y}$. 

Let $C_1,C_2 \in \mathcal{C}$, where $\mathcal{C} \in Tr(\phi,A)$ is $d$-maximum for some $d \in \omega$. We will show that $|C_1 \setminus C_2| \leq n$. Note that it suffices to consider the case in which $A$ is finite. Thus we may assume, by Lemma \ref{L3}, that $\mathcal{G}_\mathcal{C}$ is connected.

Let $\{a_1,\ldots,a_k\}  \sse C_1 \setminus C_2$ be a set of distinct elements.  By Lemma \ref{L3}, after possibly reordering, there are $B_k, B_{k-1},\ldots,B_1$ in $\mathcal{C}$, on the path from $C_1$ to $C_2$ in $\mathcal{G}_{\mathcal{C}}$, such that $a_i \in B_j$ iff $i < j$.  Thus $k \leq n$, and consequently $|C_1 \setminus C_2| \leq n$. 

Conversely, suppose $\LD(\phi) > n$.  Let $B = \{\bf{b}_0,\ldots,\bf{b}_{n}\} \sse \M{\x}$ and define $A = \{ \bf{a}_0,\ldots,\bf{a}_{n} \} \sse \M{\y}$ such that
$\monster \models \phi(\bf{b}_i,\bf{a}_j) \iff i < j$.  Put $\mathcal{C} = S_\phi(A)|_B$.  Then by Theorem \ref{T1}, $\mathcal{C}$ is 1-maximum, and $|A \setminus \emptyset| = |A| = n+1$.

\end{proof}

The above theorem shows that much of the nature of Shelah's famous characterization of stable formulas by indiscernibles (see \cite{Sh90}) is already visible at the level of maximum traces. Unspooling the theorem reveals a structural characterization of stable maximum set systems, as we now show.


If $X$ is a set, and $\mathcal{C} \sse \mathcal{P}(X)$, define the \emph{ladder dimension} of $\mathcal{C}$ to be the maximal $n\in \omega$ such that there are $x_1,\ldots,x_n \in X$ and $B_1,\ldots,B_n \in \mathcal{C}$ with $x_i \in B_j$ if and only if $i < j$.  Say that $\mathcal{C}$ is \emph{stable} just in case it has finite ladder dimension.  

If $A \sse X$, define $\mathcal{C}\Delta A = \{C\Delta A : C \in \mathcal{C}\}$.

\begin{lemma} \label{LLL}
If $\mathcal{C} \sse \mathcal{P}(X)$ has $\LD(\mathcal{C})=n$, then for any $A \sse X$, we have $\LD(\mathcal{C}\Delta A)\leq 2n$, and this bound is tight.
\end{lemma}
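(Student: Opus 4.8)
The plan is to prove the upper bound $\LD(\mathcal{C}\Delta A)\leq 2n$ by contradiction, extracting from a long ladder in $\mathcal{C}\Delta A$ a ladder of more than half its length inside $\mathcal{C}$ itself, using a pigeonhole argument on how $A$ interacts with the ladder elements. Suppose $\LD(\mathcal{C}\Delta A)\geq 2n+1$, witnessed by $x_1,\ldots,x_{2n+1}\in X$ and sets $C_1\Delta A,\ldots,C_{2n+1}\Delta A$ with $C_j\in\mathcal{C}$ and $x_i\in C_j\Delta A\iff i<j$. The key observation is that for a fixed $x_i$, whether $x_i\in C_j$ holds is obtained from whether $x_i\in C_j\Delta A$ by flipping exactly when $x_i\in A$; but crucially $x_i\in A$ does not depend on $j$. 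So $x_i\in C_j\iff (i<j)\oplus (x_i\in A)$ — meaning that for the half of the indices with $x_i\notin A$ the pattern $x_i\in C_j\iff i<j$ is preserved on the nose, while for the half with $x_i\in A$ we get the reversed pattern $x_i\in C_j\iff i\geq j$.

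First I would split the index set $\{1,\ldots,2n+1\}$ into $I_0=\{i: x_i\notin A\}$ and $I_1=\{i: x_i\in A\}$; one of them has size at least $n+1$. If $|I_0|\geq n+1$, then restricting to those $x_i$ and the corresponding $C_j$ (keeping the same $2n+1$ sets, or just the $n+1$ of them indexed by $I_0$) we would need to be slightly careful: we want both the points and the sets to range over $I_0$. Taking $i,j\in I_0$ we have $x_i\in C_j\iff i<j$ directly, so $\{C_j: j\in I_0\}$ and $\{x_i:i\in I_0\}$ witness $\LD(\mathcal{C})\geq n+1>n$, contradiction. If instead $|I_1|\geq n+1$, enumerate $I_1$ in \emph{decreasing} order as $j_1>j_2>\cdots$; then for $i,j\in I_1$, $x_i\in C_j\iff i\geq j$, and reindexing reverses this to a genuine ladder $x'_i\in C'_j\iff i<j$ of length $n+1$ in $\mathcal{C}$, again contradicting $\LD(\mathcal{C})=n$. (One minor point to handle: the ladder condition $x_i\in B_j\iff i<j$ is asymmetric, so I should state precisely which reindexing of $I_1$ produces the standard form, and check the strict-versus-nonstrict inequality; this is the only place a careless off-by-one could creep in.)

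For tightness, I would exhibit a specific $\mathcal{C}$ and $A$ achieving $\LD(\mathcal{C}\Delta A)=2n$. A natural candidate: take $X=\{1,\ldots,2n\}$, let $\mathcal{C}=\{\,[1,k]:0\leq k\leq n\,\}$ (initial segments up to length $n$, which has $\LD=n$ — hitting a longer ladder would require $n+1$ nested-in-the-ladder-sense sets), and let $A=\{1,\ldots,n\}$ or some shift thereof, chosen so that $\mathcal{C}\Delta A$ contains both the short initial segments and their "complemented" counterparts, stitching together a ladder of length $2n$. I expect I may need to tune $A$ and $\mathcal{C}$ — perhaps taking $\mathcal{C}$ to be all initial segments of length $\leq n$ on a $2n$-element order and $A$ a fixed middle block — and then explicitly write down the $2n$ points and $2n$ sets realizing $x_i\in C_j\Delta A\iff i<j$. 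The main obstacle is this tightness construction: the upper-bound direction is a clean pigeonhole, but producing an example that simultaneously keeps $\LD(\mathcal{C})$ exactly $n$ while forcing the symmetric difference to double it requires choosing $\mathcal{C}$ and $A$ so that the forward ladder and the reversed ladder (coming from the two halves $I_0,I_1$) can be concatenated, which means arranging the order types to match up at the seam.
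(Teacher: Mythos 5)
Your overall strategy is the same as the paper's: split the index set of a long ladder in $\mathcal{C}\Delta A$ according to whether $x_i\in A$, read off a direct sub-ladder from the indices outside $A$ and a reversed one from the indices inside $A$, and use essentially the same style of tightness example. The problem is that the point you flag as ``a careless off-by-one'' is not a bookkeeping detail; it is the crux, and your second case does not close. For $i\in I_1$ you have $x_i\in C_j\iff i\geq j$. Reversing the enumeration of $I_1=\{i_1<\cdots<i_m\}$ turns $i\geq j$ into the \emph{non-strict} pattern $x'_k\in C'_l\iff k\leq l$, and a non-strict ladder on $m$ points only yields a strict ladder ($x_i\in B_j\iff i<j$, which is the convention in force) of length $m-1$: one must shift the sets by one position against the points, and the missing bottom rung is a member of $\mathcal{C}$ containing none of the $m$ chosen points, which need not exist. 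So from $|I_1|\geq n+1$ you only get $\LD(\mathcal{C})\geq n$, which is not a contradiction with $\LD(\mathcal{C})=n$. The extra rung can be supplied by some $C_j$ with $j>\max I_1$ (since $x_i\in C_j\iff i\geq j$ holds for every $j$, not only $j\in I_1$), but this fails exactly when $x_{2n+1}\in A$ and $|I_1|=n+1$.

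That residual case really occurs, so the gap cannot be patched within your argument: take $X=\{1,2,3\}$, $\mathcal{C}=\{\{2,3\},\{1,2,3\},\{1,3\}\}$, $A=\{2,3\}$. Every member of $\mathcal{C}$ meets every two-element subset of $X$, so $\LD(\mathcal{C})=1$; yet $\mathcal{C}\Delta A=\{\emptyset,\{1\},\{1,2\}\}$ carries the strict ladder $x_i=i$, $B_1=\emptyset$, $B_2=\{1\}$, $B_3=\{1,2\}$, so $\LD(\mathcal{C}\Delta A)\geq 3=2n+1$. Under the strict convention your pigeonhole argument proves $\LD(\mathcal{C}\Delta A)\leq 2n+1$ and no more (for what it is worth, the paper's own treatment of the odd case likewise extracts only $\LD(\mathcal{C})\geq n$ from a ladder of length $2n+1$, so the difficulty you ran into is inherited from the statement itself). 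Separately, your tightness construction is only a sketch: the paper's version takes $X=\{-n,\ldots,-1,1,\ldots,n\}$, $\mathcal{C}=\{[0,i]\cap X:0<i\leq n\}\cup\{[-i,0]\cap X:0<i\leq n\}$ and $A=[-n,-1]$, and you would still need to verify both that $\LD(\mathcal{C})=n$ and that the forward and reversed halves of $\mathcal{C}\Delta A$ concatenate into a ladder of length $2n$.
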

\begin{proof}
Suppose that $\LD(\mathcal{C}\Delta A) = 2n$ for an integer $n$. Then there exist $x_1,\ldots,x_{2n}$ in $X$ and $C_1,\ldots,C_{2n} \in \mathcal{C}$ such that for all $i,j \leq 2n$, $x_i \in C_j \Delta A$ if and only if $i < j$.  Consider the case in which there are indices ${i_1}<\cdots<{i_n}$ such that for each $k=1,\ldots, n$, $x_{i_k} \not \in A$.  Then these elements, together with an appropriate choice of $C_{j_1},\ldots,C_{j_n}$, witness that $LD(\mathcal{C}) \geq n$.  Now suppose that the opposite holds, namely that there are indices ${i_1}<\cdots<{i_{n+1}}$ such that for each $k=1\ldots n+1$, $x_{i_k} \in A$.  Then $i_k < j$ and $x_{i_k} \not \in C_j$ are equivalent, since both are equivalent to $x_{i_k} \in C_j \Delta A$. Reindexing by $C'_j = C_{2n-j}$ and taking an appropriate $j_1 < \cdots < j_{n}$, we have that $x_{i_k} \in C'_{j_l}$ if and only if $k < l$.  Thus $LD(\mathcal{C}) \geq n$.    A similar argument shows that $LD(\mathcal{C}) \geq n$ in the case in which $LD(\mathcal{C}\Delta A) = 2n+1$ is odd.  This establishes the bound.

To see that the bound is tight, fix $n\in \omega$. Let $X$ be the integers between $-n$ and $n$, inclusive, but not including zero. Let $\mathcal{C} = \{[0,i]\cap X: 0 <i \leq n\}\cup \{[-i,0]\cap X: 0 < i \leq n\}$.  Clearly $\LD(\mathcal{C}) = n$.  But $\LD(\mathcal{C}\Delta [-n,-1]) = 2n$.
\end{proof}

Note that the example in the above proof is $1$-maximum.

\begin{corollary} \label{CC}
Let $\mathcal{C} \sse \mathcal{P}(X)$ be $d$-maximum of ladder dimension $n$.
\begin{enumerate}
\item If $\emptyset \in \mathcal{C}$, then $\mathcal{C} \sse [X]^{\leq n}$.
\item  $\mathcal{C} \Delta B \sse [X]^{\leq 2n}$ for any $B \in \mathcal{C}$. 
\item  $\mathcal{C} \sse [X]^{\leq 2n} \Delta B$ for any $B \in \mathcal{C}$. 
\end{enumerate}
\end{corollary}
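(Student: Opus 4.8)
The plan is to reduce everything to one combinatorial fact, call it $(\star)$: \emph{if $\mathcal{C}$ is $d$-maximum with $\LD(\mathcal{C}) = n$, then $|C_1 \setminus C_2| \leq n$ for all $C_1, C_2 \in \mathcal{C}$}. This is the set-system form of the forward direction of Theorem \ref{TT}, and I would prove it the same way. Given distinct elements $a_1, \dots, a_{n+1} \in C_1 \setminus C_2$, put $X_0 = \{a_1, \dots, a_{n+1}\}$. The restriction $\mathcal{C}|^{X_0}$ is still $d$-maximum (restrictions of $d$-maximum systems are $d$-maximum, as noted in the proof of Lemma \ref{tts}), so by Lemma \ref{L3} the graph $\mathcal{G}_{\mathcal{C}|^{X_0}}$ is connected and realizes Hamming distances. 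Now $C_1 \cap X_0 = X_0$ and $C_2 \cap X_0 = \emptyset$ lie at Hamming distance $n+1$, so a geodesic joining them in $\mathcal{G}_{\mathcal{C}|^{X_0}}$ has exactly $n+1$ edges; since each edge changes cardinality by exactly one and the cardinalities run from $n+1$ down to $0$, the geodesic deletes the elements of $X_0$ one at a time. Reindexing the $a_i$ in deletion order yields $B_1, \dots, B_{n+1} \in \mathcal{C}|^{X_0}$ with $a_i \in B_j \iff i < j$; lifting each $B_j$ to a member of $\mathcal{C}$ produces a ladder of length $n+1$ in $\mathcal{C}$, contradicting $\LD(\mathcal{C}) = n$.

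Part (1) is then immediate: if $\emptyset \in \mathcal{C}$, apply $(\star)$ with $C_2 = \emptyset$ to get $|C| = |C \setminus \emptyset| \leq n$ for every $C \in \mathcal{C}$, i.e. $\mathcal{C} \subseteq [X]^{\leq n}$.

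For part (2), I would first observe that $\mathcal{C}\Delta B$ is again $d$-maximum: for finite $A \subseteq X$ we have $(\mathcal{C}\Delta B)|^A = \{(C\cap A)\Delta(B\cap A) : C \in \mathcal{C}\}$, which is the image of $\mathcal{C}|^A$ under the bijection $S \mapsto S\Delta(B\cap A)$ of $\mathcal{P}(A)$, hence has the same size ${{|A|}\choose{\leq d}}$. Moreover $\emptyset = B\Delta B \in \mathcal{C}\Delta B$, and $\LD(\mathcal{C}\Delta B) \leq 2n$ by Lemma \ref{LLL}. Applying part (1) to $\mathcal{C}\Delta B$ gives $\mathcal{C}\Delta B \subseteq [X]^{\leq 2n}$. (One can also bypass Lemma \ref{LLL} and simply apply $(\star)$ twice: $|C\Delta B| = |C\setminus B| + |B\setminus C| \leq 2n$.) Finally, part (3) follows from part (2) because $S \mapsto S\Delta B$ is an involution on $\mathcal{P}(X)$: from $\mathcal{C}\Delta B \subseteq [X]^{\leq 2n}$ we get $\mathcal{C} = (\mathcal{C}\Delta B)\Delta B \subseteq [X]^{\leq 2n}\Delta B$.

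I do not expect a real obstacle here; the one place to be careful is the finite reduction in the proof of $(\star)$, where one passes to $\mathcal{C}|^{X_0}$ in order to invoke connectivity of $\mathcal{G}$ from Lemma \ref{L3} and must then check that the ladder produced inside $\mathcal{C}|^{X_0}$ lifts back to $\mathcal{C}$ — which it does, since the ladder condition only constrains membership of the elements $a_i \in X_0$.
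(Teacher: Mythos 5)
Your proposal is correct and follows essentially the same route as the paper: part (1) from the forward direction of Theorem \ref{TT} (your $(\star)$, proved via Lemma \ref{L3} exactly as the paper proves that theorem), part (2) from $\emptyset \in \mathcal{C}\Delta B$ together with Lemma \ref{LLL} and part (1), and part (3) by applying $\Delta B$ to both sides. The one point where you go beyond the paper's text is in explicitly checking that $\mathcal{C}\Delta B$ is still $d$-maximum --- a step the paper leaves tacit but which is genuinely needed to invoke part (1) --- and your parenthetical shortcut for (2), applying $(\star)$ twice to get $|C\Delta B| = |C\setminus B| + |B\setminus C| \leq 2n$ directly, is a clean way to bypass Lemma \ref{LLL} entirely.
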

\begin{proof}{The claim in (1) is clear from Theorem \ref{TT}.  For the claim in (2) note that $\emptyset \in \mathcal{C} \Delta B$ (because $B \in \mathcal{C}$), and the ladder dimension of $\mathcal{C} \Delta B$ is at most $2n$ by Lemma \ref{LLL}. Therefore $\mathcal{C} \Delta B \sse [X]^{\leq 2n}$ by (1).  Claim (3) follows after applying $\Delta B$ to both sides of the containment in (2).}
\end{proof}


The $2n$ bound in Corollary \ref{CC} part (2) is tight, as the following example shows.  Let $X = \{1,\ldots,2n\}$, and $\mathcal{C} = [X]^{\leq n}$.  Clearly $\LD(\mathcal{C}) = n$.  Now putting $B= \{1,\ldots,n\}$ gives $\mathcal{C}\Delta B$ an element of cardinality $2n$. 

On the other hand, it seems possible that for some $B \sse X$, it holds that $\mathcal{C} \Delta B \sse [X]^{\leq n}$, where the hypotheses are as in Corollary \ref{CC}. However, since the hypotheses admit all finite maximum classes, this conjecture may be too optimistic.  Such a result would clearly be the best possible.

It is also evident from the above that the stable maximum set systems are exactly those maximum set systems $\mathcal{C} \sse \mathcal{P}(X)$ which can be realized as $\mathcal{C} \sse [X]^{m} \Delta B$ for some $m \in \omega$ and $B \sse X$ (because the latter systems are clearly stable).

It would be useful to know whether every $\mathcal{C}$ of ladder dimension $n$ embeds into a $O(n)$-maximum set system $\mathcal{C'}$.  See \cite{BL98} for relevant embedding notions. It is conjectured (see \cite{Fl89}) that the vast majority of set systems are \textit{not} embeddable in maximum systems of the same VC-dimension, prompting the question of whether stable set systems, which are well behaved in so many respects, are also unusual in this way.  Very little is known about model theoretic criteria for when a definable family embeds in a maximum family, other than the easy observation that this is frequently possible in dimension one.

 Many nice properties of maximum set systems, in particular the existence of \textit{compression schemes} (see \cite{KW} for definitions), are inherited under the relation of embedded substructure. Compression schemes emphasize the amount of information needed to represent a $\phi$-type rather than the definability of the representation, and as a consequence they can be used to bound not only the VC-density of a set system, but also the size of the multiplicative constant in the definition of VC-density.  



 

\section{Conclusion}

In model theory, much of the combinatorial content of theories comes from considering formulas restricted to indiscernible sequences. The existence of sequences of indiscernibles is guaranteed by Ramsey's theorem (and compactness), though it is rarely required to exhibit a concrete sequence of indiscernibles.    

When dealing with a certain formula $\phi(\x,\y)$ on a sequence $A = \langle \bf{a}_i \rangle_{i\in I}$, it is a weaker condition to assume that $\phi$ is maximum on $A$ than to assume that $A$ is indiscernible.  However, as we have seen, if $\phi$ is maximum on $A$, that provides ``enough" indiscernibility for some combinatorial notions to manifest.  Namely, $dp$-rank, NIP, and stability can all be understood in terms of the maximum property.  Avoiding the use of indiscernibles has the potential to make these concepts, particularly $dp$-rank, much more accessible to non-model-theorists.

Unlike indiscernible sequences, maximum domains are frequently easily constructible.  In the semilinear case, it follows from the work of Floyd and Dudley that a basic semilinear family will be maximum on a set of points which is in ``general position," for which it is sufficient to take a randomly selected set of points.

Moreover, the similarity of maximum families and formulas on indiscernible sequences provides a point of contact between work done in computational learning theory and model theory, where, especially recently, researchers are pursuing compatible combinatorial goals, but without a common framework.



\bibliographystyle{jflnat}

\section*{Acknowledgements}
Many thanks to the referee and to Vince Guingona for comments on an early version of this paper. This research was partially supported by PSC-CUNY grant \# 64679-00 42.
\\

\end{document}